\newtheorem{thm}{Theorem}
\newtheorem{cor}[thm]{Corollary}
\newtheorem{lemma}[thm]{Lemma}
\theoremstyle{definition}
\theoremstyle{remark}
\numberwithin{thm}{section}
\DeclareMathAlphabet{\mathsfsl}{OT1}{cmss}{m}{sl}
\renewcommand{\phi}{\varphi}
\newcommand{\mtx}[1]{\bm{#1}}
\newcommand{\rank}{\operatorname{rank}}
\newcommand{\trace}{\operatorname{tr}}
\newcommand{\Proj}{\ensuremath{\mtx{\Pi}}}
\newcommand{\psdle}{\preccurlyeq}
\newcommand{\psdge}{\succcurlyeq}
\newcommand{\bx}{\boldsymbol{x}}
\newcommand{\by}{\boldsymbol{y}}
\newcommand{\bZ}{\boldsymbol{Z}}
\newcommand{\bz}{\boldsymbol{z}}
\newcommand{\bX}{\boldsymbol{X}}
\newcommand{\bT}{\boldsymbol{T}}
\newcommand{\bW}{\boldsymbol{W}}
\newcommand{\bO}{\boldsymbol{O}}
\newcommand{\bP}{\boldsymbol{P}}
\newcommand{\bU}{\boldsymbol{U}}
\newcommand{\bV}{\boldsymbol{V}}
\def\reals{\mathbb{R}}
\def\bx{\boldsymbol{x}}
\def\bS{\boldsymbol{S}}
\def\b0{\mathbf{0}}
\def\bP{\boldsymbol{P}}
\def\bU{\boldsymbol{U}}
\def\bC{\boldsymbol{C}}
\def\bA{\boldsymbol{A}}
\def\bY{\boldsymbol{Y}}
\def\bB{\boldsymbol{B}}
\def\bG{\boldsymbol{G}}
\def\calT{\mathcal{T}}
\def\bI{\mathbf{I}}
\def\tr{\mathrm{tr}}
\def\Sp{\mathrm{Sp}}
\title{Note}
\author{
Teng Zhang  
}
\title{Tightness of the semidefinite relaxation for orthogonal trace-sum maximization}
\begin{document}

\maketitle
\begin{abstract}
This paper studies an optimization problem on the sum of traces of matrix quadratic forms on $m$ orthogonal matrices, which can be considered as a generalization of the synchronization of rotations. While the problem is nonconvex, the paper shows that its semidefinite programming relaxation can solve the original nonconvex problems exactly, under an additive noise model with small noise in the order of $O(-m^{1/4})$, where $m$ is the number of orthogonal matrices. This result can be considered as a generalization of existing results on phase synchronization.
\end{abstract}

\section{Problem Setup}
This paper considers the problem of estimating $m$ orthogonal matrices $\bO_1, \cdots, \bO_m$ with $\bO_i\in\reals^{d_i\times r}$ from the optimization problem:
\begin{equation}\label{eq:tracesum_problem}
    \{\hat{\bO}_i\}_{i=1}^m=\max_{\bO_i\in\reals^{d_i\times r}, i=1,\cdots,m} \sum_{i,j=1, i\neq j}^m\tr(\bO_i^T\bS_{ij}\bO_j),\,\,\,\text{subject to $\bO_i^T\bO_i=\bI$ for all $1\leq i\leq m$}.
\end{equation}
This problem is called orthogonal trace-sum maximization \cite{Won2018} and has application in generalized canonical correlation analysis. If $d_1=\cdots=d_m=r$, then \eqref{eq:tracesum_problem} is reduced to the the little Grothendieck problem over the orthogonal group \cite{Bandeira20164}, which have wide applications such as multi-reference alignment \cite{Bandeira2017}, cryo-EM \cite{doi:10.1137/090767777,ZHANG2017159}, 2D/3D point set registration~\cite{7430328,Cucuringu2012, doi:10.1137/130935458}, and multiview structure from motion \cite{6374980,7785130,7789623}.

While the optimization problem \eqref{eq:tracesum_problem} is nonconvex and difficult to solve, Won et. al \cite{Won2018} studies its convex relaxation as follow. Let $D=\sum_{i=1}^md_i$, \[
\bS=.\begin{bmatrix}
    \mathbf{0} & \bS_{12} & \dots  & \bS_{1m} \\
    \bS_{21} & \mathbf{0} &  & \bS_{2m} \\
    \vdots &  & \ddots& \vdots \\
   \bS_{m1} & \bS_{m2} &\cdots  &\mathbf{0}
\end{bmatrix}\in\reals^{D\times D},  \,\,\,\text{and} \,\,\,{\bO}=\left[ \begin{array}{c}
{\bO}_1\\
\vdots\\
{\bO}_m\\
 \end{array} \right]\in\reals^{D\times r},
\]
then using $\bU=\bO\bO^T$, \eqref{eq:tracesum_problem} can be relaxed to the convex problem
\begin{equation}\label{eq:tracesum_relaxed}
\hat{\bU}=\max_{\bU\in\reals^{D\times D}, \bU=\bU^T}\langle\bS, \bU\rangle, \text{subject to $\bU\psdge \mathbf{0}, \bU_{ii}\psdle\bI, \tr(\bU_{ii})=r$.}
\end{equation}

In this work, we assume the additive noise model as follows: there exists $\{\bV_i\}_{1\leq i\leq m}, \{\bW_{ij}\}_{1\leq i\neq j\leq m}$ such that $\bV_i\in\reals^{d_i\times r}$, $\bV_i^T\bV_i=\bI$ for all $1\leq i\leq m$,
\begin{equation}\text{$\bS_{ij}=\bV_i\bV_j^T+\bW_{ij}$, and $\bW_{ij}=\bW_{ji}$.}\label{eq:model}\end{equation} In this model, $\bV_i\bV_j^T$ is considered as the clean signal and $\bW_{ij}$ is considered as the additive noise. This is a natural model for the generalized canonical analysis problem in \cite{Won2018}, and when $d_1=\cdots=d_m=r$, this is used to model the synchronization of rotations problem \cite{10.1093/imaiai/iat005,10.1093/imaiai/iat006}.

The main contribution of this work shows that if the noises $\bW_{ij}$ are small, then the solutions of \eqref{eq:tracesum_problem} and \eqref{eq:tracesum_relaxed} are equivalent in the sense that $\hat{\bU}=\hat{\bO}\hat{\bO}^T$.

The main result, Theorem~\ref{thm:main}, shows that the convex relaxation in \eqref{eq:tracesum_relaxed} provides a tractable algorithm for solving the original problem \eqref{eq:tracesum_problem} exactly.  While there exists similar  results for the problem of phase synchronizationin \cite{Bandeira2017,doi:10.1137/17M1122025}, their method can not be extended to our setting directly and this work presents the first such result on the orthogonal trace-sum maximization problem and the synchronization of rotation problem. Compared with the works on phase synchronization, this proof depends on a different optimality certificate in Lemma~\ref{lemma:equavalency}.


\subsection{Related Works}
The problem of orthogonal trace-sum maximization problem or synchronization of rotations can be considered as a generalization of the angular or phase synchronization, which estimates angles $\theta_1, \cdots, \theta_m \in[0,2\pi)$ from the observation of relative offsets $(\theta_i-\theta_j)$ mod $2\pi$. The problem has applications in cryogenic electron microscopy \cite{Singer2009AngularSB}, comparative biology~\cite{Gao2019}, and many others. To address this problem, Singer~\cite{Singer2009AngularSB} formulate the problem as an optimization problem as follows: let $x_k=e^{i\theta_k}$ for all $1\leq k\leq m$, it attempts to solve the nonconvex problem \begin{equation}\label{eq:angular}
\max_{\bx\in\mathbb{C}^m}\bx^*\bC\bx,\,\,\text{s.t. $|x_1|=\cdots=|x_m|=1$}.
\end{equation}
To solve \eqref{eq:angular}, two methods are proposed in \cite{Singer2009AngularSB}, and one of the method solves its convex relaxation 
\begin{equation}\label{eq:angular_relax}
\max_{\bX\in\mathbb{C}^{m\times m}}\tr(\bC\bX),\,\,\text{s.t. $\bX_{11}=\cdots=\bX_{mm}=1$ and $\bX\psdge\mathbf{0}$}.
\end{equation}
In fact, \eqref{eq:angular} and \eqref{eq:angular_relax} can be considered as the special case of \eqref{eq:tracesum_problem} and \eqref{eq:tracesum_relaxed} when $d_1=\cdots=d_m=r=2$.

There has been many works that attempts to establish algorithms with theoretical guarantees for \eqref{eq:angular}. For example, Bandeira et al. \cite{Bandeira2017} assumes that $\bX=\bz\bz^*+\sigma\bW$, where $\bz\in\mathbb{C}^m$ satisfies $|z_1|=\cdots=|z_m|=1$ and $\bW\in\mathbb{C}^{m\times m}$ a Hermitian Gaussian Wigner matrix. It shows that if $\sigma\leq \frac{1}{18}m^{\frac{1}{4}}$, then the solution of \eqref{eq:angular_relax} is a matrix of rank one, which is also the solution to \eqref{eq:angular} in the sense that $\bX=\bx\bx^*$. Alternatively, Liu et al. \cite{doi:10.1137/16M105808X} investigated a modified power method for the original problem \eqref{eq:angular} and shows that the algorithm succeeds when $\sigma=O(m^{\frac{1}{6}})$. In addition, \cite{doi:10.1137/16M110109X} proves that a generalized power method converges to solution of \eqref{eq:angular}  when $\sigma=O(m^{\frac{1}{4}})$. Using a more involved argument and a modified power method, Zhong and Boumal improved the bound in \cite{Bandeira2017} to $\sigma=O(\sqrt{\frac{m}{\log m}})$. In fact, this paper follows this line of works and solve the problem of  \eqref{eq:tracesum_problem}, based on it convex relaxation \eqref{eq:tracesum_relaxed}. 

There are works that solve phase synchronization without using the optimization problem \eqref{eq:angular}. \cite{doi:10.1137/18M1217644} studies the problem from the landscape of a proposed objective function and shows that the global minimizer is unique even when the associated graph is incomplete and follows from the Erd\"{o}s-R\'{e}nyi random graphs.   \cite{doi:10.1002/cpa.21750} proposes an approximate message passing (AMP) algorithm, and analyzes its behavior by identifying phases where the problem is easy, computationally hard, and statistically impossible.

A even more special case of \eqref{eq:angular}  is the synchronization over $\mathbb{Z}_2=\{1,-1\}$~\cite{10.1093/comnet/cnu050}, which assume that $x_i$ in \eqref{eq:angular} are real-valued and $x_i=\pm 1$. For this problem, \cite{Fei2019AchievingTB} shows that the solution of \eqref{eq:angular_relax} matches the minimax lower bound on the optimal Bayes error rate for original problem \eqref{eq:angular}.

If $d_1=\cdots=d_m=r>2$, it is called the problem of synchronization of rotations in some literature. \cite{10.1093/imaiai/iat006} studies it from the perspective of estimation on Riemannian manifolds, and derive the Cram\'{e}r-Rao bounds of synchronization, that is, lower bounds on the variance of unbiased estimators, and \cite{DBLP:journals/corr/BoumalC13} shows that a lower bound concentrates on its expectation. Distributed algorithms with theoretical guarantees on convergence are proposed in \cite{THUNBERG2017243}. The formulation \eqref{eq:tracesum_problem} has applications in  graph realization and point cloud registration, multiview Structure from Motion (SfM)  \cite{6374980,7785130,7789623},  common lines in Cryo-EM \cite{doi:10.1137/090767777},  orthogonal least squares~\cite{ZHANG2017159}, and 2D/3D point set registration~\cite{7430328}. \cite{pmlr-v97-gao19f} generalized  \eqref{eq:tracesum_problem} by assuming multi-frequency information, and develop a two-stage algorithm that leverages the additional information.  \cite{10.1093/imaiai/iat005} discusses a method to make the estimator in \eqref{eq:tracesum_problem}  more robust to outlying observations. Another robust algorithm based on maximum likelihood estimator is proposed in \cite{6760038}. As for the theoretical properties, \cite{Bandeira20164} considers \eqref{eq:tracesum_relaxed} as an approximation algorithm to solve \eqref{eq:tracesum_problem}, and studies its approximation ratio. However, we are not aware of works in the spirit of  \cite{Bandeira2017,doi:10.1137/16M105808X,doi:10.1137/16M110109X,doi:10.1137/17M1122025} that studies the effectiveness of algorithm in the additive noise model \eqref{eq:model}.

The studied problem can be considered as a special case of the generic
 synchronization problems, which recovers a vector of
 elements given noisy pairwise measurements of the relative elements $g_ug_v^{-1}$, where here we assume that elements are in the group of orthogonal matrices.   \cite{abbe2017group} studies the properties of weak recovery when the elements are from a generic compact group and the underlying graph of pairwise observations is the $d$-dimensional grid.   \cite{doi:10.1002/cpa.21750} proposes an approximate message passing (AMP) algorithm for solving synchronization problems over a class of compact groups. \cite{doi:10.1137/16M1106055} generates the estimation from compact groups to the class of Cartan motion groups, which includes the important special case of rigid motions by applying the compactification process. \cite{doi:10.1002/cpa.21760} discusses the performance of a projected power method to the problem where the elements are scalars are the observations are the modulo differences $x_i - x_j$ mod $m$, and establishes its theoretical properties. \cite{6875186}  assumes that measurement graph is sparse and there are corrupted observations, and show that minimax recovery rate depends almost exclusively on the edge sparsity of the measurement graph irrespective of other graphical metrics.

\subsection{Notation}
This work sometime divides a matrix $\bX$ of size $D\times D$ into $m^2$ submatrices, such that the $ij-$th block is a $d_i\times d_j$ submatrix. We use $\bX_{ij}$ or $[\bX]_{ij}$ to denote this submatrix. Similarly, some times we divide a matrix of $\bY\in\reals^{D\times r}$ or a vector $\by\in\reals^D$ into $m$ submatrices or $m$ vector, where the $i$-th component, denoted by $\bY_i$, $[\bY]_i$ or $\by_i$, $[\by]_i$, is a matrix of size $d_i\times r$ or a vector of length $m_i$.

For any matrix $\bX$, we use $\|\bX\|$ to represent its operator norm and $\|\bX\|_F$ to represent its Frobenius norm. In addition, $\bP_{\bX}$ represents an orthonormal matrix whose column space is the same as $\bX$, $\bP_{\bX^\perp}$ is an orthonormal matrix whose column space is the orthogonal complement of the column space of $\bX$, $\Proj_{\bX}=\bP_{\bX}\bP_{\bX}^T$ is the projector to the column space of $\bX$, and $\Proj_{\bX^\perp}$ is the projection matrix to the orthogonal complement of the column space of $\bX$. If $\bY\in\reals^{n\times n}$ is symmetric, we use $\lambda_1(\bY)\geq \lambda_2(\bY)\geq \cdots\geq \lambda_n(\bY)$ to denote its eigenvalues in descending order.

\section{Main result}

The main result of the paper is as follows:
\begin{thm}\label{thm:main}
Assume that there exists $\bW\in\reals^{D\times D}$ such that $\bW_{ii}=\mathbf{0}$, $\bW_{ij}=\bW_{ij}^T$, and $\bS_{ij}=\bV_i\bV_j^T+\bW_{ij}$ for all $1\leq i\neq j\leq m$, where $\bV_i\in\reals^{d_i\times r}$ and $\bV_i^T\bV_i=\bI$ for all $1\leq i\leq m$, then when $\bW$ is small in the sense that
\begin{equation}\label{eq:condition_deterministic}
m-1>2m\frac{2\left(\max_{1\leq i\leq m}\|[\bW{\bV}]_i\|_F+4\|\bW\|^2\sqrt{\frac{r}{m}}\right)}{m-4\|\bW\|\sqrt{r}}+2\left(\max_{1\leq i\leq m}\|[\bW{\bV}]_i\|_F+4\|\bW\|^2\sqrt{\frac{r}{m}}\right)+4\|\bW\|\sqrt{\frac{r}{m}},\end{equation}
then the solutions of \eqref{eq:tracesum_problem} and \eqref{eq:tracesum_relaxed} are equivalent in the sense that $\hat{\bU}_{ij}=\hat{\bO}_i\hat{\bO}_j^T$ for all $1\leq i,j\leq m$.
\end{thm}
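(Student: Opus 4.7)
The plan is to prove tightness by exhibiting a dual certificate for the SDP. Forming the Lagrangian of \eqref{eq:tracesum_relaxed} with a PSD multiplier $\bZ$ for $\bU \psdge \mathbf{0}$, PSD block multipliers $\boldsymbol{\Lambda}_i$ for $\bI - \bU_{ii} \psdge \mathbf{0}$, and scalars $\mu_i$ for $\tr(\bU_{ii}) = r$, stationarity forces
$$\bZ = -\bS + \text{blockdiag}(\boldsymbol{\Lambda}_i - \mu_i \bI),$$
so that the KKT conditions reduce to $\bZ \psdge \mathbf{0}$, $\bZ \hat{\bO} = \mathbf{0}$, $\boldsymbol{\Lambda}_i \psdge \mathbf{0}$, and $\boldsymbol{\Lambda}_i \Proj_{\hat{\bO}_i^\perp} = \mathbf{0}$. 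The natural candidate uses first-order optimality of $\hat{\bO}$ on the product of Stiefel manifolds, which yields symmetric $\hat{\bGamma}_i \in \reals^{r\times r}$ with $\hat{\bGamma}_i = \hat{\bO}_i^T [\bS\hat{\bO}]_i$ and $[\bS\hat{\bO}]_i = \hat{\bO}_i \hat{\bGamma}_i$; setting $\mu_i = -\lambda_{\min}(\hat{\bGamma}_i)$ and $\boldsymbol{\Lambda}_i = \hat{\bO}_i(\hat{\bGamma}_i + \mu_i \bI)\hat{\bO}_i^T$ automatically verifies $\boldsymbol{\Lambda}_i \psdge \mathbf{0}$, $\boldsymbol{\Lambda}_i \Proj_{\hat{\bO}_i^\perp} = \mathbf{0}$, and $\bZ \hat{\bO} = \mathbf{0}$, and the resulting diagonal blocks collapse to $\bZ_{ii} = \boldsymbol{\Lambda}_i + \lambda_{\min}(\hat{\bGamma}_i) \bI$.

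The principal obstacle is to verify $\bZ \psdge \mathbf{0}$ globally with $\ker(\bZ) = \range(\hat{\bO})$, which is the hypothesis that the equivalency Lemma~\ref{lemma:equavalency} turns into the conclusion $\hat{\bU} = \hat{\bO}\hat{\bO}^T$. I would first analyze the clean case $\bW = \mathbf{0}$: here $\hat{\bO} = \bV$, $\hat{\bGamma}_i = (m-1)\bI$, $\mu_i = -(m-1)$, $\boldsymbol{\Lambda}_i = \mathbf{0}$, and a direct eigenvalue computation shows $\bZ^{(0)} := (m-1)\bI - \bS = (m-1)\bI - \bV\bV^T + \text{blockdiag}(\bV_i\bV_i^T)$ has spectrum $\{0\ (\text{mult.}\ r),\, m-1\ (\text{mult.}\ D - mr),\, m\ (\text{mult.}\ (m-1)r)\}$ with kernel exactly $\range(\bV)$. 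Thus $\bZ^{(0)}$ is PSD with spectral gap $m-1$ above its kernel, giving a budget of $m-1$ to absorb noise.

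For general $\bW$, I would write $\bZ = \bZ^{(0)} + \bZ^{\mathrm{err}}$ with $\bZ^{\mathrm{err}} = -\bW + \text{blockdiag}\bigl((\lambda_{\min}(\hat{\bGamma}_i) - (m-1))\bI + \boldsymbol{\Lambda}_i\bigr)$, and feed two perturbation bounds into the spectral-gap argument. First, a Davis--Kahan-type bound on the leading $r$-dimensional eigenspace of $\bS$ shows $\hat{\bO} = \bV\bQ + \boldsymbol{E}$ with $\|\boldsymbol{E}\|$ controlled by $\|\bW\|/(m-1)$ for a Procrustes-optimal $\bQ \in O(r)$, producing the $\|\bW\|\sqrt{r/m}$ and $\|\bW\|^2\sqrt{r/m}$ contributions to \eqref{eq:condition_deterministic}. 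Second, left-multiplying the first-order condition by $\hat{\bO}_i^T$ and substituting $\hat{\bO} = \bV\bQ + \boldsymbol{E}$ together with $\bS_{ij} = \bV_i\bV_j^T + \bW_{ij}$ isolates the ``signal-times-noise'' term $\bQ^T\bV_i^T [\bW\bV]_i \bQ$ of norm $\|[\bW\bV]_i\|_F$, while the remaining cross-terms are controlled by $\|\bW\|\|\boldsymbol{E}\|$. Since $\bZ \hat{\bO} = \mathbf{0}$ exactly, $\bZ \psdge \mathbf{0}$ is equivalent to strict positivity on $\range(\hat{\bO})^\perp$, where the gap $m-1$ of $\bZ^{(0)}$ (modulated by the small rotation from $\range(\bV)$ to $\range(\hat{\bO})$) must dominate $\|\bZ^{\mathrm{err}}\|$; tracking the constants through this inequality produces exactly \eqref{eq:condition_deterministic}. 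Given the validated certificate, Lemma~\ref{lemma:equavalency} forces any optimal $\bU$ to have $\range(\bU) \subseteq \range(\hat{\bO})$, and the block constraints $\tr(\bU_{ii}) = r$, $\bU_{ii} \psdle \bI$ together with $\hat{\bO}_i^T \hat{\bO}_i = \bI$ then pin $\bU$ down to $\hat{\bO}\hat{\bO}^T$.
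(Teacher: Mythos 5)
Your overall strategy---a KKT dual certificate $\bZ\psdge\mathbf{0}$ with $\bZ\hat{\bO}=\mathbf{0}$, verified by computing the clean-case spectrum of $(m-1)\bI-\bS$ and then perturbing---is essentially the paper's argument (its decomposition $\bS=\hat{\bT}^{(1)}+\hat{\bT}^{(2)}+c\bI$ in Lemmas~\ref{lemma:equavalency}--\ref{lemma:noisy2} is your certificate in different clothing, with $-\hat{\bT}^{(1)}$ playing the role of $\bZ$), and your clean-case eigenvalue computation is correct. However, two steps of your plan have genuine gaps. First, the choice $\mu_i=-\lambda_{\min}(\hat{\bGamma}_i)$ makes $\boldsymbol{\Lambda}_i=\hat{\bO}_i(\hat{\bGamma}_i-\lambda_{\min}(\hat{\bGamma}_i)\bI)\hat{\bO}_i^T$ \emph{singular} on $\range(\hat{\bO}_i)$ (identically zero in the clean case), whereas Lemma~\ref{lemma:equavalency} requires $\{P_{\tilde{\bV}_i}^T\hat{\bT}^{(2)}_{ii}P_{\tilde{\bV}_i}\}$ to be strictly positive definite in order to conclude \emph{uniqueness} of the SDP solution; with only PSD multipliers you certify optimality of $\hat{\bO}\hat{\bO}^T$ but not that every optimizer equals it. You must leave slack on both sides, i.e.\ choose $\mu_i$ strictly between the two thresholds---this is exactly the role of the free constant $c$ in Lemma~\ref{lemma:noisy2}, where $c$ is picked between \eqref{eq:noisy2_1} and \eqref{eq:noisy2_2}.

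Second, and more seriously, the perturbation step as proposed would not reach the stated noise level. A Davis--Kahan bound controls the top-$r$ eigenspace of $\bS$, but $\hat{\bO}$ is the maximizer over the \emph{product of Stiefel manifolds}, not that eigenspace, so Davis--Kahan does not apply to it; the paper instead uses the variational inequality $\tr(\tilde{\bV}^T\bS\tilde{\bV})\geq\tr(\bV^T\bS\bV)$ to get $\|\tilde{\bV}-\bV\|_F\leq 4\|\bW\|\sqrt{r/m}$. More importantly, the certificate's diagonal blocks require the \emph{blockwise} bound $\max_i\|\tilde{\bV}_i-\bV_i\|_F=O\bigl((\max_i\|[\bW\bV]_i\|_F+\|\bW\|^2\sqrt{r/m}+\sqrt{r})/m\bigr)$, since this quantity enters \eqref{eq:condition_deterministic} multiplied by $2m$. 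The trivial passage from the global bound, $\max_i\|\tilde{\bV}_i-\bV_i\|_F\leq\|\tilde{\bV}-\bV\|_F\leq 4\|\bW\|\sqrt{r/m}$, would turn that term into $8\|\bW\|\sqrt{rm}$ and cap the tolerable noise at $\|\bW\|=O(\sqrt{m})$, i.e.\ $\sigma=O(1)$ for Gaussian noise rather than $O(m^{1/4})$. The missing idea is the leave-one-block-out comparison in the proof of Lemma~\ref{lemma:pertubation}: compare $\tilde{\bV}$ against the competitor $\bar{\bV}$ obtained by replacing only $\tilde{\bV}_i$ with $\bV_i$, which yields the per-block inequality
\begin{equation*}
\tfrac{1}{2}\lambda_r(\bV^T\tilde{\bV})\,\|\bV_i-\tilde{\bV}_i\|_F\leq \|[\bW\tilde{\bV}]_i\|_F+\sqrt{r},
\end{equation*}
and hence the $1/m$ decay. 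Without this (or an equivalent blockwise $\ell_\infty$-type perturbation argument), your plan cannot produce \eqref{eq:condition_deterministic}.
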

The proof of Theorem~\ref{thm:main} will be presented in Section~\ref{sec:proof}. While the condition \eqref{eq:condition_deterministic} is rather complicated, we can apply a probablistic model and prove that it holds with high probability under the regime that the size of noise grows with $m$. In particular, we follow \cite{doi:10.1137/16M105808X,Bandeira2017,doi:10.1137/17M1122025} and use additive Gaussian noise model that $\bW_{ij}$ are i.i.d. sampled from $N(0,\sigma^2)$.
\begin{cor}\label{cor:probablistic}
If for all $i> j$, the entries of $\bW_{ij}\in\reals^{m_i\times m_j}$ are i.i.d. sampled from $N(0,\sigma^2)$ and $\bW_{ji}=\bW_{ij}^T$, then the condition in \eqref{eq:condition_deterministic} holds almost surely for $\sigma<\frac{1}{16r^{\frac{3}{4}}d^{\frac{1}{2}}} m^{1/4}$ 
as $d_1=\cdots=d_m=d$, $d, r$ are fixed and $m$ goes to infinity.
\end{cor}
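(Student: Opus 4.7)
The plan is to bound the two random quantities that appear in \eqref{eq:condition_deterministic}, namely $\|\bW\|$ and $\max_{1\le i\le m}\|[\bW\bV]_i\|_F$, almost surely as $m\to\infty$, and then to verify that the resulting deterministic inequality is satisfied once $\sigma\le\frac{1}{16 r^{3/4} d^{1/2}} m^{1/4}$. Only these two noise-dependent quantities appear, so the argument reduces to two standard random-matrix estimates followed by a direct asymptotic calculation.

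For the operator norm, $\bW$ is a symmetric matrix of size $md\times md$ with i.i.d.\ $N(0,\sigma^2)$ entries above the diagonal, except that the $m$ diagonal blocks of size $d\times d$ are set to zero. Since $d$ is fixed, zeroing those blocks is a lower-order perturbation, and the Bai--Yin type almost-sure limit for Gaussian symmetric matrices gives $\|\bW\|/\sqrt{md}\to 2\sigma$, so $\|\bW\|\le 3\sigma\sqrt{md}$ eventually a.s. This delivers $4\|\bW\|^2\sqrt{r/m}\le 36\sigma^2 d\sqrt{mr}$ and $4\|\bW\|\sqrt{r}=O(\sigma\sqrt{mdr})=o(m)$ under $\sigma=O(m^{1/4})$.

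For the block Frobenius norms, since each $\bV_j$ has orthonormal columns and the blocks $\{\bW_{ij}\}_{j\neq i}$ are mutually independent Gaussian matrices, the $d\times r$ matrix $[\bW\bV]_i=\sum_{j\neq i}\bW_{ij}\bV_j$ has i.i.d.\ $N(0,(m-1)\sigma^2)$ entries, hence $\|[\bW\bV]_i\|_F^2\sim (m-1)\sigma^2\chi^2_{dr}$. A standard $\chi^2$ tail bound, a union bound over $i=1,\ldots,m$, and Borel--Cantelli then give
\[
\max_{1\le i\le m}\|[\bW\bV]_i\|_F\le \sigma\sqrt{(m-1)(dr+C\log m)}=O(\sigma\sqrt{m\log m})
\]
almost surely for large $m$, which is $O(m^{3/4}\sqrt{\log m})=o(m)$ under $\sigma=O(m^{1/4})$.

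Substituting both estimates into \eqref{eq:condition_deterministic} and using $4\|\bW\|\sqrt{r}=o(m)$ to replace the denominator by $(1+o(1))m$, the right-hand side becomes $6A+o(m)$ where $A=\max_i\|[\bW\bV]_i\|_F+4\|\bW\|^2\sqrt{r/m}$; since the first term of $A$ is itself $o(m)$, this simplifies to $24\|\bW\|^2\sqrt{r/m}+o(m)\le 216\sigma^2 d\sqrt{mr}+o(m)$. Plugging in $\sigma^2\le\frac{\sqrt{m}}{256\,d\,r^{3/2}}$ yields $216\sigma^2 d\sqrt{mr}\le\frac{216}{256\,r}\,m<m$ for every $r\ge 1$, so \eqref{eq:condition_deterministic} holds for all sufficiently large $m$ almost surely. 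The only real subtlety is the upgrade from high probability to almost sure, which is handled by the classical a.s.\ spectral-norm limit for Gaussian symmetric matrices and by Borel--Cantelli applied to the exponential $\chi^2_{dr}$ tail; no genuinely new tools are needed beyond these.
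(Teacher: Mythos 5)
Your proposal is correct and follows essentially the same route as the paper: a Gaussian operator-norm bound for $\|\bW\|$, a $\chi^2$ tail bound plus union bound for $\max_i\|[\bW\bV]_i\|_F$, and then direct substitution into \eqref{eq:condition_deterministic}. If anything you are more careful than the paper in two places: you correctly identify $[\bW\bV]_i$ as a full $d\times r$ Gaussian matrix (so $\chi^2_{dr}$ rather than the paper's $\chi^2_{r^2}$, though both are $O(1)$ degrees of freedom and irrelevant asymptotically), and you make the Borel--Cantelli step and the final arithmetic ($\tfrac{216}{256r}<1$) explicit where the paper only asserts the condition ``can be verified.''
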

We remark that when $d=r=2$, Corollary~\ref{cor:probablistic} means that the algorithm suceeds when $\sigma<\frac{1}{40}m^{\frac{1}{4}}$, and it recovers the result in~\cite[Lemma 3.2]{Bandeira2017} ($\sigma<\frac{1}{14}m^{\frac{1}{4}}$) up to a constant factor. While it does not match the rate $\sigma=O(\sqrt{\frac{m}{\log m}})$ proved in \cite{doi:10.1137/17M1122025}, we conjecture that  a more involved argument can be used to improve the upper bound of $\sigma$ in Corollary \ref{cor:probablistic}.

\begin{proof}
WLOG we may assume that $\bV_i=[\bI_{r\times r};\mathbf{0}_{(m_i-r)\times r}]$, that is, $\bV_i$ consists of a $r\times r$ identity matrix and a $m_i-r\times r$ zero matrix. Then calculation shows that $[\bW\bV]_i$ is a matrix of size $m_i\times r$ that can be written as $[\bG_{r\times r}; \mathbf{0}_{(m_i-r)\times r}]$, and  $\bG$ is i.i.d. sampled from$N(0,\sigma^2(m-1))$. As a result, applying~\cite[Proposition 2.2]{measure2005} we have 
\[
\Pr\left(\frac{1}{\sigma^2(m-1)}\|[\bW\bV]_i\|_F^2>r^2+\delta\right)\leq \left(\frac{r^2}{r^2+\delta}\right)^{-r^2/2} e^{-\delta/2}
\]
and a union bound over $1\leq i\leq m$ implies\[
\Pr\left(\frac{1}{\sigma^2(m-1)}\max_{1\leq i\leq m}\|[\bW\bV]_i\|_F^2>r^2+\delta\right)\leq m\left(\frac{r^2}{r^2+\delta}\right)^{-r^2/2} e^{-\delta/2}.
\]

In addition, $\bW$ can be generated by $\bW=\bW^{(1)}+\bW^{(2)}$, where $\bW^{[1]}$ is i.i.d. generated from $N(0,\sigma^2/2)$, $[\bW^{(2)}]_{ij}=[\bW^{(1)}]_{ji}^T$ for $i\neq j$, and $[\bW^{(2)}]_{ii}=-[\bW^{(1)}]_{ii}$. Then for both $\bW^{(1)}$ and $\bW^{(2)}$, their entries are i.i.d. sampled from $N(0,\sigma^2/2)$, and~\cite[Theorem II.13]{Szarek:survey} implies that 
\[
\Pr\left(\frac{\sqrt{2}}{\sigma}\|\bW^{(1)}\|\geq 2\sqrt{D}+t\right)=\Pr\left(\frac{\sqrt{2}}{\sigma}\|\bW^{(2)}\|\geq 2\sqrt{D}+t\right)<e^{-t^2/2},
\] 
and as a result,
\[
\Pr\left(\frac{1}{\sigma}\|\bW\|\geq \sqrt{2}(2\sqrt{D}+t)\right)<2e^{-t^2/2},
\] 
\end{proof}
plugging $t=\frac{1}{2}\sqrt{D}$ and $\delta=2\log m$, \eqref{eq:condition_deterministic} holds with probability at least $1-\left(\frac{r^2}{r^2+2\log m}\right)^{-r^2/2}-2e^{-D/8}$, if
\begin{equation}\label{eq:condition_deterministic1}
m-1>2m\frac{2\left((\sigma\sqrt{m}(r^2+2\log m))+48D\sigma^2\sqrt{\frac{r}{m}}\right)}{m-16\sigma\sqrt{Dr}}+2\left((\sigma\sqrt{m}(r^2+2\log m))+48D\sigma^2\sqrt{\frac{r}{m}}\right)+16\sigma\sqrt{\frac{Dr}{m}},\end{equation}
which can be verified to hold when $d_1=\cdots=d_m=d$, $d, r$ are fixed and $m$ goes to infinity.

\section{Proof of Theorem~\ref{thm:main}}\label{sec:proof}
The proof of Theorem \ref{thm:main} can be divided into several components as follows. First, Lemma~\ref{lemma:equivalent} establishes an equivalent formulation of \eqref{eq:tracesum_problem}, given in  \eqref{eq:tracesum_problem1}. Based on this equivalent formulation, Lemma~\ref{lemma:stationary} shows the property that $\bS$ can be decomposed into two matrices, each with a certain property. Then, based on this decomposition, Lemma~\ref{lemma:equavalency} gives a condition such that the minimizer of \eqref{eq:tracesum_problem1} is also the unique solution to \eqref{eq:tracesum_relaxed}. It remains to verify this condition. Lemma~\ref{lemma:clean} analyzes the decomposition for the clean case that $\bW=\mathbf{0}$. Then using a perturbation argument, Lemmas~\ref{lemma:noisy}, ~\ref{lemma:noisy2}, and~\ref{lemma:pertubation} shows that when $\|\bW\|$ is small, the decomposition does not change much and the condition would still be satisfied.

We first present our lemmas and a short proof of Theorem~\ref{thm:main}, and leave the proofs of lemmas to Section~\ref{sec:proof_lemma}. 
\begin{lemma}\label{lemma:equivalent}
The optimization problem \eqref{eq:tracesum_problem} is equivalent to the problem 
\begin{equation}\label{eq:tracesum_problem1}
\tilde{\bU}=\max_{\bU\in\reals^{D\times D}, \bU=\bU^T}\tr(\bS\bU), \text{subject to $\bU\psdge \mathbf{0}, \bU_{ii}\psdle\bI, \tr(\bU_{ii})=r, \rank(\bU)=r$.}
\end{equation}
\end{lemma}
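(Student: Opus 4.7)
The plan is to prove equivalence by exhibiting a value-preserving correspondence between feasible points of \eqref{eq:tracesum_problem} and \eqref{eq:tracesum_problem1}.

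First I would verify the easy direction. Given $\{\bO_i\}$ feasible for \eqref{eq:tracesum_problem}, stack them into $\bO = [\bO_1;\cdots;\bO_m] \in \reals^{D\times r}$ and set $\bU = \bO\bO^T$. Then $\bU \psdge \mathbf{0}$, $\rank(\bU) = r$ (since $\bO_i^T\bO_i = \bI$ for any single $i$ forces $\bO$ to have $r$ linearly independent columns), $\bU_{ii} = \bO_i\bO_i^T$ is the projector onto $\range(\bO_i)$ so $\bU_{ii}\psdle \bI$ and $\tr(\bU_{ii}) = \tr(\bO_i^T\bO_i) = r$. The objectives match by cyclicity of trace and the fact that $\bS$ has zero diagonal blocks:
\[
\tr(\bS\bU) = \sum_{i,j=1}^m \tr(\bS_{ij}\bO_j\bO_i^T) = \sum_{i\neq j}\tr(\bO_i^T\bS_{ij}\bO_j).
\]

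For the reverse direction, let $\bU$ be feasible for \eqref{eq:tracesum_problem1}. Since $\bU\psdge\mathbf{0}$ and $\rank(\bU)=r$, I can factor $\bU = \bO\bO^T$ with $\bO\in\reals^{D\times r}$, and partition $\bO = [\bO_1;\cdots;\bO_m]$ with $\bO_i\in\reals^{d_i\times r}$. The crucial step is to show that the constraints force $\bO_i^T\bO_i = \bI$ for each $i$. To see this, note that $\bO_i^T\bO_i$ and $\bO_i\bO_i^T = \bU_{ii}$ share the same nonzero eigenvalues. Since $\bU_{ii}\psdle\bI$, these eigenvalues lie in $[0,1]$, so all $r$ eigenvalues of the $r\times r$ PSD matrix $\bO_i^T\bO_i$ lie in $[0,1]$. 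But $\tr(\bO_i^T\bO_i) = \tr(\bU_{ii}) = r$, which forces every one of the $r$ eigenvalues to equal $1$, so $\bO_i^T\bO_i = \bI$. The objective equality follows from the same computation as before, so $\{\bO_i\}$ is feasible for \eqref{eq:tracesum_problem} with the same objective value.

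There is no serious obstacle: the proof is essentially a standard PSD lifting argument combined with the spectral observation that $\bU_{ii}\psdle\bI$ together with $\tr(\bU_{ii})=r$ and $\rank(\bU)=r$ pins down the Gram factor of each block to be an orthonormal $d_i\times r$ matrix. The only mild subtlety worth flagging is that the rank-$r$ factorization of $\bU$ is unique only up to right multiplication by an $r\times r$ orthogonal matrix, but this gauge freedom is harmless since it preserves both feasibility and objective value in \eqref{eq:tracesum_problem}.
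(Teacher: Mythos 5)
Your proposal is correct and follows essentially the same route as the paper: factor $\bU=\bO\bO^T$, and use $\bU_{ii}\psdle\bI$ together with $\tr(\bU_{ii})=r$ to pin all $r$ singular values of $\bO_i$ (equivalently, all eigenvalues of $\bO_i^T\bO_i$) to $1$, hence $\bO_i^T\bO_i=\bI$; the objective identity is the same trace computation using $\bS_{ii}=\mathbf{0}$. Your write-up is if anything slightly more complete, since you explicitly verify feasibility (including $\rank(\bU)=r$) in the forward direction, which the paper states only briefly.
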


\begin{lemma}\label{lemma:stationary}
The solution to \eqref{eq:tracesum_problem1} satisfied the following property: let $\tilde{\bU}=\tilde{\bV}\tilde{\bV}^T$ with $\tilde{\bV}\in\reals^{D\times r}$, 
then $\bS$ can be written as $\bS=\bT^{(1)}+\bT^{(2)}$, where $\bT^{(1)}, \bT^{(2)}\in\reals^{D\times D}$ satisfy that  $\Proj_{\tilde{\bV}^\perp}\bT^{(1)} \Proj_{\tilde{\bV}^\perp}=\bT^{(1)}$, $\bT^{(2)}_{ij}=\mathbf{0}$ for $1\leq i\neq j\leq m$, and $\Proj_{\tilde{\bV}_i}\bT^{(2)}_{ii} \Proj_{\tilde{\bV}_i}=\bT^{(2)}_{ii}$ for $1\leq i\leq m$.

In addition, $\bT^{(1)}$ and $\bT^{(2)}$ are defined as follows: let $\tilde{\bV}\in\reals^{D\times r}$ such that $\tilde{\bU}=\tilde{\bV}\tilde{\bV}^T$, then
\begin{equation}\label{eq:tildeT1}
[\bT^{(1)}]_{ij}=\bS_{ij},\,\, [\bT^{(2)}]_{ij}=\mathbf{0}\,\,\text{for $1\leq i\neq j\leq m$}, \,\,[\bT^{(1)}]_{ii}=-[\bT^{(2)}]_{ii}\,\,\text{for $1\leq i\leq m$},
\end{equation}
and $[\bT^{(1)}]_{ii}$ is chosen such that
\begin{equation}\label{eq:tildeT2}
P_{\tilde{\bV}_i^\perp}^T[\bT^{(1)}]_{ii}=\mathbf{0},\,\,[\bT^{(1)}]_{ii}P_{\tilde{\bV}_i^\perp}=\mathbf{0},\,\,\tilde{\bV}_i^T[\bT^{(1)}]_{ii}\tilde{\bV}_i=-\tilde{\bV}_i^T\left(\sum_{j=1,j\neq i}^m\bS_{ij}\tilde{\bV}_j\right).
\end{equation}

\end{lemma}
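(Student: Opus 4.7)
The plan is to derive the decomposition from the first-order optimality conditions for problem \eqref{eq:tracesum_problem1}. By Lemma~\ref{lemma:equivalent}, any maximizer $\tilde{\bU}$ can be written as $\tilde{\bU}=\tilde{\bV}\tilde{\bV}^T$ where $\tilde{\bV}$ solves the orthogonal trace-sum problem \eqref{eq:tracesum_problem}, so in particular $\tilde{\bV}_i^T\tilde{\bV}_i=\bI$ for each $i$. Hence I may work on the Stiefel manifold and extract KKT information from there.

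The first step is to write the Lagrangian with a symmetric multiplier $\bLambda_i$ for each constraint $\bV_i^T\bV_i=\bI$, and to compute $\nabla_{\bV_i}\sum_{k\neq \ell}\tr(\bV_k^T\bS_{k\ell}\bV_\ell)=2\sum_{j\neq i}\bS_{ij}\bV_j$. Setting the gradient equal to $2\tilde{\bV}_i\bLambda_i$ yields, at the optimum,
\begin{equation*}
\sum_{j\neq i}\bS_{ij}\tilde{\bV}_j=\tilde{\bV}_i\bLambda_i\qquad\text{with $\bLambda_i=\bLambda_i^T$ symmetric.}
\end{equation*}
This single identity packages the two facts I will need: (a) the quantity $\sum_{j\neq i}\bS_{ij}\tilde{\bV}_j$ lies in the column space of $\tilde{\bV}_i$, i.e.\ $\Proj_{\tilde{\bV}_i^\perp}\sum_{j\neq i}\bS_{ij}\tilde{\bV}_j=\mathbf{0}$, and (b) $\tilde{\bV}_i^T\sum_{j\neq i}\bS_{ij}\tilde{\bV}_j=\bLambda_i$ is symmetric.

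The second step is to take $\bT^{(1)}$ and $\bT^{(2)}$ as defined by \eqref{eq:tildeT1}--\eqref{eq:tildeT2} and verify the three required properties. The decomposition $\bS=\bT^{(1)}+\bT^{(2)}$ is immediate on off-diagonal blocks (where $\bT^{(2)}=\mathbf{0}$) and on diagonal blocks (where $\bS_{ii}=\mathbf{0}$ and $[\bT^{(1)}]_{ii}+[\bT^{(2)}]_{ii}=\mathbf{0}$ by construction). The block-diagonal structure of $\bT^{(2)}$ is built into \eqref{eq:tildeT1}, and the identity $\Proj_{\tilde{\bV}_i}\bT^{(2)}_{ii}\Proj_{\tilde{\bV}_i}=\bT^{(2)}_{ii}$ follows because \eqref{eq:tildeT2} forces both the column and row spaces of $[\bT^{(1)}]_{ii}$ (hence of $[\bT^{(2)}]_{ii}=-[\bT^{(1)}]_{ii}$) to lie in $\tilde{\bV}_i$; concretely I would write $[\bT^{(1)}]_{ii}=\tilde{\bV}_i\bM_i\tilde{\bV}_i^T$ with $\bM_i=-\tilde{\bV}_i^T\bigl(\sum_{j\neq i}\bS_{ij}\tilde{\bV}_j\bigr)$.

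The main content is verifying $\Proj_{\tilde{\bV}^\perp}\bT^{(1)}\Proj_{\tilde{\bV}^\perp}=\bT^{(1)}$, equivalently that $\bT^{(1)}$ is symmetric with $\bT^{(1)}\tilde{\bV}=\mathbf{0}$. For the block $[\bT^{(1)}\tilde{\bV}]_i$ I compute
\begin{equation*}
[\bT^{(1)}\tilde{\bV}]_i=[\bT^{(1)}]_{ii}\tilde{\bV}_i+\sum_{j\neq i}\bS_{ij}\tilde{\bV}_j=-\Proj_{\tilde{\bV}_i}\sum_{j\neq i}\bS_{ij}\tilde{\bV}_j+\sum_{j\neq i}\bS_{ij}\tilde{\bV}_j=\Proj_{\tilde{\bV}_i^\perp}\sum_{j\neq i}\bS_{ij}\tilde{\bV}_j,
\end{equation*}
which vanishes by KKT condition (a). For symmetry of $\bT^{(1)}$, the off-diagonal blocks are symmetric because $\bS_{ij}=\bS_{ji}^T$, while the diagonal blocks are symmetric iff $\bM_i=\bM_i^T$, which is exactly KKT condition (b). The main obstacle is really just this KKT-derivation step: one must be careful that the multiplier $\bLambda_i$ is symmetric (this uses that the constraint $\bV_i^T\bV_i-\bI$ already lives in the symmetric matrices, so the normal space at the Stiefel manifold consists of $\tilde{\bV}_i\bA$ with $\bA=\bA^T$), because that symmetry is precisely what makes the diagonal-block construction in \eqref{eq:tildeT2} well-posed and turns $\bT^{(1)}$ into a symmetric matrix annihilating $\tilde{\bV}$.
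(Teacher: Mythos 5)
Your proof is correct, but it reaches the conclusion by a genuinely different route from the paper's. The paper never leaves the rank-constrained matrix formulation \eqref{eq:tracesum_problem1}: it computes the tangent cone $\calT_1$ of the feasible set at $\tilde{\bU}$, uses the trace constraint to upgrade the semidefinite inequality in that cone to an equality (so $\calT_1$ is in fact a subspace $\calT_2$), concludes from optimality that $\bS$ is orthogonal to $\calT_2$, and identifies the orthogonal complement of $\calT_2$ as precisely the direct sum of $\{\bX:\Proj_{\tilde{\bV}^\perp}\bX\Proj_{\tilde{\bV}^\perp}=\bX\}$ and the block-diagonal matrices with $\Proj_{\tilde{\bV}_i}\bX_{ii}\Proj_{\tilde{\bV}_i}=\bX_{ii}$; the explicit formulas \eqref{eq:tildeT1}--\eqref{eq:tildeT2} are then read off, with the last identity coming from $\bT^{(1)}\tilde{\bV}=\mathbf{0}$. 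You instead route through Lemma~\ref{lemma:equivalent} back to the Stiefel-manifold problem \eqref{eq:tracesum_problem}, extract the Lagrange condition $\sum_{j\neq i}\bS_{ij}\tilde{\bV}_j=\tilde{\bV}_i\mtx{\Lambda}_i$ with $\mtx{\Lambda}_i$ symmetric, and verify directly that the matrices defined by \eqref{eq:tildeT1}--\eqref{eq:tildeT2} (i.e.\ $[\bT^{(1)}]_{ii}=\tilde{\bV}_i\bM_i\tilde{\bV}_i^T$) have all three claimed properties; your conditions (a) and (b) are exactly the content of $\bT^{(1)}\tilde{\bV}=\mathbf{0}$ together with symmetry of $\bT^{(1)}$. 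Your version is more elementary and self-contained in that it avoids the somewhat delicate tangent-cone computation at a boundary point of the PSD cone and produces the explicit decomposition constructively; what the paper's version buys is that the same tangent-cone description \eqref{eq:tangent} is reused almost verbatim as the feasible-direction condition \eqref{eq:tangent4} in the proof of Lemma~\ref{lemma:equavalency}, so deriving it here does double duty. Two small points to make explicit in your write-up: the gradient computation and the symmetry of the off-diagonal blocks of $\bT^{(1)}$ both use $\bS_{ij}=\bS_{ji}^T$ (i.e.\ symmetry of $\bS$, which the paper assumes throughout despite the typo in \eqref{eq:model}), and you should note that a global maximizer on the Stiefel product is automatically a critical point since the constraint set is a smooth compact manifold, which licenses the KKT step.
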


\begin{lemma}\label{lemma:equavalency}
If there exists a decomposition $\bS=\hat{\bT}^{(1)}+\hat{\bT}^{(2)}+c\bI$ such that $\Proj_{\tilde{\bV}^\perp}\hat{\bT}^{(1)}\Proj_{\tilde{\bV}^\perp}=\hat{\bT}^{(1)}$, $\hat{\bT}^{(2)}_{ii}=\Proj_{\tilde{\bV}_i}\hat{\bT}^{(2)}_{ii} \Proj_{\tilde{\bV}_i}$ for all $1\leq i\leq m$,  and $\{P_{\tilde{\bV}_i}^T\hat{\bT}^{(2)}_{ii} P_{\tilde{\bV}_i}\}_{i=1}^m$ and $-P_{\tilde{\bV}^\perp}^T\hat{\bT}^{(1)}P_{\tilde{\bV}^\perp}$ are  positive definite matrices. The $\tilde{\bU}$, the solution to \eqref{eq:tracesum_relaxed}, is also the unique solution to \eqref{eq:tracesum_problem1}.
\end{lemma}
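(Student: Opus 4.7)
My approach is a dual-certificate (KKT) argument: I convert the hypothesized decomposition $\bS = \hat{\bT}^{(1)} + \hat{\bT}^{(2)} + c\bI$ directly into Lagrange multipliers for the convex program \eqref{eq:tracesum_relaxed}, verify the KKT conditions at $\tilde{\bU} = \tilde{\bV}\tilde{\bV}^T$, and use the strict positive definiteness hypotheses to upgrade optimality to uniqueness. Primal feasibility of $\tilde{\bU}$ is automatic, since \eqref{eq:tracesum_problem1} is precisely \eqref{eq:tracesum_relaxed} with the extra rank constraint. As a preliminary, $\tilde{\bU}_{ii} = \tilde{\bV}_i\tilde{\bV}_i^T \preceq \bI$ combined with $\tr(\tilde{\bV}_i\tilde{\bV}_i^T) = r$ forces all $r$ singular values of $\tilde{\bV}_i$ to equal one, so $\tilde{\bV}_i^T\tilde{\bV}_i = \bI_r$ and $P_{\tilde{\bV}_i}$ may be identified with $\tilde{\bV}_i$.

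I next introduce multipliers $\bm{\Pi}\succeq 0$ for $\bU\succeq 0$, block-diagonal $\bm{\Lambda}$ with blocks $\bm{\Lambda}_i\succeq 0$ for $\bU_{ii}\preceq \bI$, and scalars $c_i\in\R$ for $\tr(\bU_{ii}) = r$. Stationarity of the Lagrangian reduces to the single equation that $\bS + \bm{\Pi}$ is block diagonal with $i$-th block $\bm{\Lambda}_i + c_i\bI$. Setting $\bm{\Pi} := -\hat{\bT}^{(1)}$, $\bm{\Lambda}_i := \hat{\bT}^{(2)}_{ii}$, and $c_i := c$ turns stationarity into exactly the assumed decomposition. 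Writing $\bm{\Lambda}_i = \tilde{\bV}_i \bm{M}_i \tilde{\bV}_i^T$ with $\bm{M}_i := \tilde{\bV}_i^T\hat{\bT}^{(2)}_{ii}\tilde{\bV}_i$, the support conditions together with the (strict) positive definiteness of $-P_{\tilde{\bV}^\perp}^T\hat{\bT}^{(1)}P_{\tilde{\bV}^\perp}$ and of $\bm{M}_i$ yield $\bm{\Pi}\succeq 0$ and $\bm{\Lambda}_i\succeq 0$. Complementary slackness is immediate: $\bm{\Pi}\tilde{\bV} = 0$ gives $\bm{\Pi}\tilde{\bU} = 0$, and $\tilde{\bV}_i^T\Proj_{\tilde{\bV}_i^\perp} = 0$ gives $\bm{\Lambda}_i(\bI - \tilde{\bU}_{ii}) = 0$. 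Convexity of the primal SDP then lets KKT conclude that $\tilde{\bU}$ is optimal for \eqref{eq:tracesum_relaxed}.

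For uniqueness, let $\bU^\star$ be any other primal optimum. Because stationarity holds at our dual, the Lagrangian $L(\bU)$ is constant in $\bU$; evaluating at both $\tilde{\bU}$ and $\bU^\star$ and using $\langle \bS,\bU^\star\rangle = \langle \bS,\tilde{\bU}\rangle$ yields $\langle \bm{\Pi},\bU^\star\rangle + \sum_i \langle \bm{\Lambda}_i,\bI - \bU^\star_{ii}\rangle = 0$, and nonnegativity of each PSD-PSD inner product forces $\bm{\Pi}\bU^\star = 0$ and $\bm{\Lambda}_i(\bI - \bU^\star_{ii}) = 0$. The strict positivity of $-P_{\tilde{\bV}^\perp}^T\hat{\bT}^{(1)}P_{\tilde{\bV}^\perp}$ means $\ker(\bm{\Pi}) = \range(\tilde{\bV})$, so $\bU^\star = \tilde{\bV}\bm{N}\tilde{\bV}^T$ for some PSD $\bm{N}\in\R^{r\times r}$; substituting into the block complementary-slackness equation and using $\tilde{\bV}_i^T\tilde{\bV}_i = \bI_r$ collapses to $\bm{M}_i(\bm{N} - \bI) = 0$, and invertibility of $\bm{M}_i$ forces $\bm{N} = \bI$, hence $\bU^\star = \tilde{\bU}$.

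The argument is structurally routine, and I do not anticipate a serious obstacle. The one delicate point is ensuring that the two strict (rather than merely semi-) positive definiteness hypotheses are deployed where needed: strict positivity of $-P_{\tilde{\bV}^\perp}^T\hat{\bT}^{(1)}P_{\tilde{\bV}^\perp}$ pins down $\range(\bU^\star) \subseteq \range(\tilde{\bV})$, while strict positivity of $\bm{M}_i$ identifies the Gram factor $\bm{N}$ as $\bI$; together they promote the feasibility-plus-complementary-slackness certificate into strict complementarity, which delivers uniqueness.
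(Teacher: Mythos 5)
Your proposal is correct and is in substance the same argument as the paper's: the paper verifies your certificate directly, showing $\langle \bS, \bU-\tilde{\bU}\rangle<0$ for every feasible $\bU\neq\tilde{\bU}$ by splitting $\bS$ into $\hat{\bT}^{(1)}+\hat{\bT}^{(2)}+c\bI$ and pairing each piece against the sign-constrained compressions of $\bX=\bU-\tilde{\bU}$, which is exactly your KKT stationarity-plus-complementary-slackness computation with $\bm{\Pi}=-\hat{\bT}^{(1)}$ and $\bm{\Lambda}_i=\hat{\bT}^{(2)}_{ii}$. Even the uniqueness step coincides (both reduce to $\bU=\tilde{\bV}\bZ\tilde{\bV}^T$ and then $\bZ=\bI$ via strict definiteness), and like the paper you implicitly use that $\hat{\bT}^{(2)}$ is block diagonal, which the lemma statement omits but the construction in Lemma~\ref{lemma:noisy2} supplies.
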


\begin{lemma}\label{lemma:clean}
Denote $\bT^{(1)}$ generated by \eqref{eq:tildeT1} and \eqref{eq:tildeT2} under the case $\bW=\mathbf{0}$ as $\bT^{(1)*}$. Let $L_1=\Sp(\bV)$ and $L_2=\{\bx\in\reals^D: \bx_i\in\Sp(\bV_i)\}$, then  $\bT^{(1)*}=-m\Proj_{L_2\cap L_1^\perp}$. 
\end{lemma}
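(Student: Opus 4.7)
The plan is a direct computation: first determine $\tilde{\bV}$ in the clean case, then compute $\bT^{(1)*}$ block by block from equations \eqref{eq:tildeT1}--\eqref{eq:tildeT2}, and finally match the result against $-m\Proj_{L_2\cap L_1^\perp}$ using the elementary projection identity.

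First, I would observe that in the clean case $\bW=\mathbf{0}$ the objective of \eqref{eq:tracesum_problem} can be rewritten as $\|\sum_i\bV_i^T\bO_i\|_F^2-\sum_i\|\bV_i^T\bO_i\|_F^2$, and the Cauchy--Schwarz inequality together with the bound $\|\bV_i^T\bO_i\|_F^2\le r$ shows the maximum is achieved exactly when $\bO_i=\bV_i\bR$ for a common orthogonal $\bR$. Hence $\tilde{\bU}=\bV\bV^T$ is unique, and since the construction in \eqref{eq:tildeT1}--\eqref{eq:tildeT2} depends only on $\tilde{\bU}$ (through the projectors onto $\Sp(\tilde{\bV}_i)=\Sp(\bV_i)$), I can take $\tilde{\bV}=\bV$ without loss of generality.

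Next I would read off $\bT^{(1)*}$ from \eqref{eq:tildeT1}--\eqref{eq:tildeT2}. The off-diagonal blocks give $[\bT^{(1)*}]_{ij}=\bS_{ij}=\bV_i\bV_j^T$ for $i\neq j$. For the diagonal blocks, the first two identities in \eqref{eq:tildeT2} force $[\bT^{(1)*}]_{ii}=\bV_i\bM_i\bV_i^T$ for some $r\times r$ matrix $\bM_i$, and the third identity yields
\[
\bM_i=\bV_i^T[\bT^{(1)*}]_{ii}\bV_i=-\bV_i^T\sum_{j\neq i}\bV_i\bV_j^T\bV_j=-(m-1)\bI,
\]
so $[\bT^{(1)*}]_{ii}=-(m-1)\bV_i\bV_i^T$.

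Finally I would identify the right-hand side. Since each column of $\bV$ has its $i$-th block lying in $\Sp(\bV_i)$, we have $L_1\subseteq L_2$, so $\Proj_{L_2\cap L_1^\perp}=\Proj_{L_2}-\Proj_{L_1}$. The orthonormality of each $\bV_i$ gives $\Proj_{L_2}=\mathrm{blkdiag}(\bV_i\bV_i^T)_{i=1}^m$, and the relation $\bV^T\bV=\sum_i\bV_i^T\bV_i=m\bI$ gives $\Proj_{L_1}=\tfrac{1}{m}\bV\bV^T$. Therefore
\[
-m\Proj_{L_2\cap L_1^\perp}=-m\,\mathrm{blkdiag}(\bV_i\bV_i^T)+\bV\bV^T,
\]
whose $(i,j)$-block is $\bV_i\bV_j^T$ for $i\neq j$ and $-(m-1)\bV_i\bV_i^T$ for $i=j$, matching $\bT^{(1)*}$ block by block.

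There is no real obstacle here; the only point that requires a moment's care is justifying the choice $\tilde{\bV}=\bV$, since the statement of the lemma implicitly assumes we are working with a specific $\tilde{\bV}$ arising from the solution of \eqref{eq:tracesum_problem1}. The rest is a mechanical verification, made transparent by the identity $\Proj_{L_2\cap L_1^\perp}=\Proj_{L_2}-\Proj_{L_1}$ available precisely because $L_1\subseteq L_2$.
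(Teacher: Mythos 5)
Your proof is correct and follows essentially the same route as the paper's: compute the blocks of $\bT^{(1)*}$ from \eqref{eq:tildeT1}--\eqref{eq:tildeT2} with $\tilde{\bV}=\bV$, then match them against $-m(\Proj_{L_2}-\Proj_{L_1})$ using $L_1\subseteq L_2$, $\Proj_{L_2}=\mathrm{blkdiag}(\bV_i\bV_i^T)$ and $\Proj_{L_1}=\tfrac{1}{m}\bV\bV^T$. Your additional verification that the clean problem's maximizer is exactly $\bO_i=\bV_i\bR$ (so that one may take $\tilde{\bV}=\bV$) is a point the paper leaves implicit, and it is a welcome addition rather than a deviation.
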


\begin{lemma}\label{lemma:noisy}
\begin{align}
&\|\bT^{(1)*}-\bT^{(1)}\|\leq m\max_{1\leq i\leq m}\|\tilde{\bV}_i-\bV_i\|+\max_{1\leq i\leq m}\|\sum_{j=1}^m\bW_{ij}\tilde{\bV}_j\|+\|\bV^T\tilde{\bV}-m\bI\|,\label{eq:noisy1}\\
&\text{$\|\tilde{\bV}_i^T[\bT^{(2)}]_{ii}\tilde{\bV}_i-(m-1)\bI\|<\Big\|\sum_{j=1}^m\bW_{ij}\tilde{\bV}_i\Big\|+m\|\tilde{\bV}_i^T\bV_i-\bI\|+\|{\bV}^T\tilde{\bV}-m\bI\|$ for all $1\leq i\leq m$\label{eq:noisy2}.}
\end{align}
\end{lemma}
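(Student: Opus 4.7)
The plan is to prove both inequalities by computing the relevant blocks of $\bT^{(1)*}-\bT^{(1)}$ and $[\bT^{(2)}]_{ii}$ explicitly, using the first-order optimality of $\tilde{\bV}$ together with Lemma~\ref{lemma:clean}. The KKT condition for $\tilde{\bV}$ as a stationary point of \eqref{eq:tracesum_problem1} gives $\sum_{j\neq i}\bS_{ij}\tilde{\bV}_j=\tilde{\bV}_i\bm{\Lambda}_i$ for a symmetric multiplier $\bm{\Lambda}_i$, which collapses \eqref{eq:tildeT2} to the compact form $[\bT^{(1)}]_{ii}=-\sum_{j\neq i}\bS_{ij}\tilde{\bV}_j\tilde{\bV}_i^T$; Lemma~\ref{lemma:clean} combined with $\Proj_{L_2\cap L_1^\perp}=\Proj_{L_2}-\tfrac{1}{m}\bV\bV^T$ likewise yields the explicit clean blocks $[\bT^{(1)*}]_{ij}=\bV_i\bV_j^T$ for $i\neq j$ and $[\bT^{(1)*}]_{ii}=-(m-1)\bV_i\bV_i^T$.

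For \eqref{eq:noisy1}, I would apply $\bT^{(1)*}-\bT^{(1)}$ to a unit vector $\bx$. Setting $\bD_i=\tilde{\bV}_i-\bV_i$ and $\bDelta=\bV^T\tilde{\bV}-m\bI$ and substituting $\bS_{ij}=\bV_i\bV_j^T+\bW_{ij}$, algebraic rearrangement produces the $i$-th output block $\bV_i\bigl[(m-1)\bD_i^T+\bDelta\tilde{\bV}_i^T-\bV_i^T\bD_i\tilde{\bV}_i^T\bigr]\bx_i+[\bW\tilde{\bV}]_i\tilde{\bV}_i^T\bx_i-[\bW\bx]_i$. The first bracket is bounded by $(m\|\bD_i\|+\|\bDelta\|)\|\bx_i\|$ using $\|\bV_i\|=1$ and the triangle inequality, and the middle term by $\|[\bW\tilde{\bV}]_i\|\|\bx_i\|$, which together would recover the three groups of terms on the right-hand side.

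For \eqref{eq:noisy2}, the computation is simpler: using $[\bT^{(2)}]_{ii}=-[\bT^{(1)}]_{ii}=\sum_{j\neq i}\bS_{ij}\tilde{\bV}_j\tilde{\bV}_i^T$ and $\tilde{\bV}_i^T\tilde{\bV}_i=\bI$, one obtains $\tilde{\bV}_i^T[\bT^{(2)}]_{ii}\tilde{\bV}_i=\tilde{\bV}_i^T\bV_i(\bV^T\tilde{\bV}-\bV_i^T\tilde{\bV}_i)+\tilde{\bV}_i^T\sum_j\bW_{ij}\tilde{\bV}_j$. Writing $\tilde{\bV}_i^T\bV_i=\bI+(\tilde{\bV}_i^T\bV_i-\bI)$ and subtracting $(m-1)\bI$ expands into a dominant sum $\bDelta+(m-1)(\tilde{\bV}_i^T\bV_i-\bI)+\tilde{\bV}_i^T\sum_j\bW_{ij}\tilde{\bV}_j$ together with lower-order cross terms of size $\|\tilde{\bV}_i^T\bV_i-\bI\|^2$ and $\|\tilde{\bV}_i^T\bV_i-\bI\|\|\bDelta\|$, which the triangle inequality bounds by the stated right-hand side (the slack absorbed by the strict inequality).

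The main obstacle I anticipate is the third piece $-[\bW\bx]_i$ in the decomposition for \eqref{eq:noisy1}: bounded directly this contributes $\|\bW\|$, which does not appear on the right-hand side. The resolution should exploit the identity $\bE_W\tilde{\bV}=\mathbf{0}$ for $\bE_W=-\bW+\mathrm{bdiag}([\bW\tilde{\bV}]_i\tilde{\bV}_i^T)$, which follows immediately from $\tilde{\bV}_i^T\tilde{\bV}_i=\bI$ and collapses the effective action of $\bE_W$ onto $\Sp(\tilde{\bV})^\perp$; a sharper block-wise bound on that restricted action should replace $\|\bW\|$ by $\max_i\|[\bW\tilde{\bV}]_i\|$. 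Nailing down that cancellation is the one non-routine step of the argument.
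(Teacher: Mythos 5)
Your block-by-block route is essentially the paper's own: for \eqref{eq:noisy2} the paper performs the same expansion of $\tilde{\bV}_i^T\sum_{j\neq i}\bS_{ij}\tilde{\bV}_j$, separating the $\bW$ term and bounding $\|\tilde{\bV}_i^T\bV_i-\bI\|\,\|\sum_{j\neq i}\bV_j^T\tilde{\bV}_j\|\leq (m-1)\|\tilde{\bV}_i^T\bV_i-\bI\|$ instead of re-expanding into $\bDelta$ and cross terms (your second-order leftovers and the paper's step $\|\bV_i-\tilde{\bV}_i\|=\|\tilde{\bV}_i^T\bV_i-\bI\|$ are comparably loose, and neither matters asymptotically); and for \eqref{eq:noisy1} your bracket $(m-1)\bD_i^T+\bDelta\tilde{\bV}_i^T-\bV_i^T\bD_i\tilde{\bV}_i^T$ is exactly the diagonal-block estimate \eqref{eq:j_exclude_i2}, with the stationarity condition $\sum_{j\neq i}\bS_{ij}\tilde{\bV}_j\in\Sp(\tilde{\bV}_i)$ used implicitly by the paper in the same way.

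The genuine gap is the one you flag yourself, and your proposed resolution does not close it. The identity $\bm{E}\tilde{\bV}=\mathbf{0}$ for $\bm{E}=-\bW+\mathrm{bdiag}_i\bigl([\bW\tilde{\bV}]_i\tilde{\bV}_i^T\bigr)$ only says that $\bm{E}$ annihilates the $r$-dimensional space $\Sp(\tilde{\bV})$; it places no constraint on its action on the $(D-r)$-dimensional complement, so it cannot convert $\|\bm{E}\|\leq\|\bW\|+\max_i\|[\bW\tilde{\bV}]_i\|$ into $\max_i\|[\bW\tilde{\bV}]_i\|$. For instance, with $r=1$, $d_i=2$, $\tilde{\bV}_i=\bV_i=\be_1$ and $\bW_{ij}=\eps\,\be_2\be_2^T$ for $i\neq j$, one has $[\bW\tilde{\bV}]_i=\mathbf{0}$ for every $i$ while $\|\bm{E}\|=\|\bW\|=(m-1)\eps$, so generically $\|\bm{E}\|\asymp\|\bW\|$. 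You should know that the paper's proof stops at the same point: its inequality \eqref{eq:pertubation_1} reads $\|\bT^{(1)}-\bT^{(1)*}\|\leq\|\bW\|+\max_i\|\cdots\|$, and the $\|\bW\|$ term is then dropped without justification in the final sentence. In other words, the missing $\|\bW\|$ is not a cancellation you failed to find but an omission in the statement of \eqref{eq:noisy1}; the right-hand side should carry an additional $+\|\bW\|$, which propagates into \eqref{eq:noisy2_2}, \eqref{eq:noisy2_0} and \eqref{eq:condition_deterministic} but does not affect the conclusion of Corollary~\ref{cor:probablistic}, since $\|\bW\|=O(\sigma\sqrt{D})=o(m)$ in that regime. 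The correct fix for your write-up is to retain the $\|\bW\|$ term rather than to pursue the restricted-action bound.
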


\begin{lemma}\label{lemma:noisy2}
If \begin{equation}\label{eq:noisy2_0}m-1>2m\max_{1\leq i\leq m}\|\tilde{\bV}_i-\bV_i\|+2\max_{1\leq i\leq m}\|\sum_{j=1}^m\bW_{ij}\tilde{\bV}_j\|+2\|\bV^T\tilde{\bV}-m\bI\|,\end{equation} then there exists  $c$, $\hat{\bT}^{(1)}$ and $\hat{\bT}^{(2)}$ such that the conditions in Lemma~\ref{lemma:equavalency} are satisfied, that is, $\bS=\hat{\bT}^{(1)}+\hat{\bT}^{(2)}+c\bI$, $\Proj_{\tilde{\bV}^\perp}\hat{\bT}^{(1)}\Proj_{\tilde{\bV}^\perp}=\hat{\bT}^{(1)}$, $\hat{\bT}^{(2)}_{ii}=\Proj_{\tilde{\bV}_i}\hat{\bT}^{(2)}_{ii} \Proj_{\tilde{\bV}_i}$ for all $1\leq i\leq m$, and $\{P_{\tilde{\bV}_i}^T\hat{\bT}^{(2)}_{ii} P_{\tilde{\bV}_i}\}_{i=1}^m$ and $-P_{\tilde{\bV}^\perp}^T\hat{\bT}^{(1)}P_{\tilde{\bV}^\perp}$ are  positive definite matrices.
\end{lemma}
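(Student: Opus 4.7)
The plan is to build the decomposition by shifting the one provided by Lemma~\ref{lemma:stationary} by $c\bI$. Let $\tilde{L}_2 = \{\bx \in \reals^D : \bx_i \in \Sp(\tilde{\bV}_i)\text{ for all }i\}$ and let $\Proj_{\tilde{L}_2}$ denote the block-diagonal projector onto $\tilde{L}_2$ (with diagonal blocks $\tilde{\bV}_i\tilde{\bV}_i^T$). Since $\bS_{ii}=\mathbf{0}$, the structural constraints of Lemma~\ref{lemma:equavalency} essentially determine a unique decomposition for each $c$:
\[
\hat{\bT}^{(1)} = \bT^{(1)} - c\Proj_{\tilde{L}_2^\perp}, \qquad \hat{\bT}^{(2)} = \bT^{(2)} - c\Proj_{\tilde{L}_2}.
\]
The identity $\bS = \hat{\bT}^{(1)} + \hat{\bT}^{(2)} + c\bI$ is immediate from $\Proj_{\tilde{L}_2^\perp} + \Proj_{\tilde{L}_2} = \bI$. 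Block-diagonality of $\hat{\bT}^{(2)}$ and support of its blocks on $\tilde{\bV}_i$ follow from the corresponding properties of $\bT^{(2)}$ and $\Proj_{\tilde{L}_2}$; and $\hat{\bT}^{(1)}$ is supported on $\tilde{\bV}^\perp$ because $\bT^{(1)}$ is (by Lemma~\ref{lemma:stationary}) and because $\tilde{\bV} \subset \tilde{L}_2$ gives $\Proj_{\tilde{L}_2^\perp}\tilde{\bV} = 0$.

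Turning to positivity, since $\tilde{\bV}_i$ has orthonormal columns one can take $\bP_{\tilde{\bV}_i}=\tilde{\bV}_i$, and then $\bP_{\tilde{\bV}_i}^T\hat{\bT}^{(2)}_{ii}\bP_{\tilde{\bV}_i} = \tilde{\bV}_i^T\bT^{(2)}_{ii}\tilde{\bV}_i - c\bI$. By \eqref{eq:noisy2} (and $\|\tilde{\bV}_i^T\bV_i-\bI\|\leq \|\tilde{\bV}_i-\bV_i\|$), the first term is $\succeq (m-1-\delta_1)\bI$ where $\delta_1 \leq \delta := m\max_i\|\tilde{\bV}_i-\bV_i\| + \max_i\|\sum_j\bW_{ij}\tilde{\bV}_j\| + \|\bV^T\tilde{\bV}-m\bI\|$, so the first positivity condition holds whenever $c < m-1-\delta_1$.

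The core of the argument is the positivity of $-\bP_{\tilde{\bV}^\perp}^T\hat{\bT}^{(1)}\bP_{\tilde{\bV}^\perp}$. Using $\bT^{(1)*} = -m\Proj_{L_2\cap L_1^\perp}$ from Lemma~\ref{lemma:clean} together with $\|\bT^{(1)}-\bT^{(1)*}\| \leq \delta_2 \leq \delta$ from \eqref{eq:noisy1}, for any unit $\bu \in \tilde{\bV}^\perp$ split as $\bu = \bu_1 + \bu_2$ with $\bu_1 \in \tilde{L}_2\cap\tilde{\bV}^\perp$ and $\bu_2 \in \tilde{L}_2^\perp$, one gets
\[
\bu^T(-\hat{\bT}^{(1)})\bu \geq m\|\Proj_{L_2\cap L_1^\perp}\bu\|^2 + c\|\bu_2\|^2 - \delta_2.
\]
The key geometric estimate is $\|\Proj_{L_2\cap L_1^\perp}\bu\|^2 + \|\bu_2\|^2 \geq 1 - O(\max_i\|\tilde{\bV}_i-\bV_i\|)$, which combines the clean-case orthogonal decomposition $\Proj_{L_1^\perp} = \Proj_{L_2^\perp} + \Proj_{L_2\cap L_1^\perp}$ with the block-wise bound $\|\Proj_{L_2^\perp}-\Proj_{\tilde{L}_2^\perp}\| \leq 2\max_i\|\tilde{\bV}_i-\bV_i\|$ and the estimate $\|\Proj_{L_1}\bu\| \leq \max_i\|\tilde{\bV}_i-\bV_i\|$ for $\bu \in \tilde{\bV}^\perp$ (via $\bV^T\bu = (\bV-\tilde{\bV})^T\bu$ and $\bV^T\bV = m\bI$). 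This yields a lower bound of the form $c > \delta_2 + O(\max_i\|\tilde{\bV}_i-\bV_i\|)$.

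Combining the two bounds, a feasible $c$ exists provided $\delta_1 + \delta_2 < m-1$ up to the lower-order corrections, which is exactly the hypothesis $m-1 > 2\delta$ encoded in \eqref{eq:noisy2_0}. The main obstacle will be controlling the perturbation of $\tilde{L}_2\cap\tilde{\bV}^\perp$ away from $L_2\cap L_1^\perp$ tightly enough that these lower-order corrections do not consume the slack available; in particular, a Schur complement argument in the two-block basis $\tilde{\bV}^\perp = (\tilde{L}_2\cap\tilde{\bV}^\perp)\oplus\tilde{L}_2^\perp$, exploiting that the off-diagonal coupling $\|\bW_1^T\bT^{(1)}\bW_2\|$ is only $O(m\max_i\|\tilde{\bV}_i-\bV_i\| + \delta_2)$, will be needed to ensure the final feasibility range matches \eqref{eq:noisy2_0}.
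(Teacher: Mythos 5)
Your construction is exactly the paper's: you subtract $c\Proj_{\tilde{\bV}_i^\perp}$ and $c\Proj_{\tilde{\bV}_i}$ from the diagonal blocks of the decomposition supplied by Lemma~\ref{lemma:stationary} (your $\Proj_{\tilde{L}_2^\perp}$ and $\Proj_{\tilde{L}_2}$ are precisely the block-diagonal matrices with these blocks), and your verification of the structural identities and of the positivity of $P_{\tilde{\bV}_i}^T\hat{\bT}^{(2)}_{ii}P_{\tilde{\bV}_i}$ via \eqref{eq:noisy2}, yielding the constraint $c<m-1-\delta_1$, matches the paper's \eqref{eq:noisy2_1}.

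The divergence, and the genuine gap, is in the negative definiteness of $P_{\tilde{\bV}^\perp}^T\hat{\bT}^{(1)}P_{\tilde{\bV}^\perp}$. Your quadratic-form route gives, for a unit $\bu\perp\tilde{\bV}$, the lower bound $\min(m,c)\bigl(\|\Proj_{L_2\cap L_1^\perp}\bu\|^2+\|\bu_2\|^2\bigr)-\delta_2\geq \min(m,c)(1-\epsilon)-\delta_2$ with $\epsilon\geq 2\max_i\|\tilde{\bV}_i-\bV_i\|$. Feasibility of $c$ then requires $\delta_2/(1-\epsilon)<c<m-1-\delta_1$, i.e.\ roughly $m-1>\delta_1+\delta_2+\epsilon(m-1)$, and the extra term $\epsilon(m-1)\approx 2m\max_i\|\tilde{\bV}_i-\bV_i\|$ is of the \emph{same order} as one of the three summands already making up the right-hand side of \eqref{eq:noisy2_0}. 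So under the hypothesis exactly as stated (where the slack $m-1-2\delta$ may be arbitrarily small), your argument does not close; you acknowledge this and defer to a Schur-complement refinement that is not carried out, so the key step remains unproved. The paper sidesteps the geometry of $\tilde{L}_2\cap\tilde{\bV}^\perp$ versus $L_2\cap L_1^\perp$ entirely by counting eigenvalues: $\hat{\bT}^{(1)}$ annihilates the column space of $\tilde{\bV}$ exactly (this is built into Lemma~\ref{lemma:stationary} and the choice of shift), hence has $r$ zero eigenvalues there, and it suffices to show $\lambda_{r+1}(\hat{\bT}^{(1)})<0$. In the clean case $\hat{\bT}^{(1)*}=-m\Proj_{L_2\cap L_1^\perp}-c\Proj_{L_2^\perp}$ has $\lambda_{r+1}=-\min(m,c)$, and Weyl's inequality together with the operator-norm bound \eqref{eq:noisy1} gives $\lambda_{r+1}(\hat{\bT}^{(1)})<0$ as soon as $c>\delta_2$, which pairs with $c<m-1-\delta_1$ to give exactly \eqref{eq:noisy2_0}. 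If you want to salvage your route, you would need the Weyl-type argument anyway (or a strengthened hypothesis with a larger constant); as it stands the proposal proves a weaker statement than the lemma.
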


\begin{lemma}\label{lemma:pertubation}
The solution of \eqref{eq:tracesum_problem1}, $\tilde{\bU}$, admits a decomposition $\tilde{\bU}=\tilde{\bV}\tilde{\bV}^T$ with $\tilde{\bV}\in\reals^{D\times r}$, such that
\begin{align}\label{eq:lemma_pertubation1}
&\|\tilde{\bV}-\bV\|_F\leq 4\|\bW\|\sqrt{\frac{r}{m}},  \,\,\,\max_{1\leq i\leq m}\|[\bW\tilde{\bV}]_i\|_F\leq \max_{1\leq i\leq m}\|[\bW{\bV}]_i\|_F+4\|\bW\|^2\sqrt{\frac{r}{m}}, \\ &\max_{i=1,\cdots m}\|\tilde{\bV}_i-\bV_i\|_F\leq \frac{2\left(\max_{1\leq i\leq m}\|[\bW{\bV}]_i\|_F+4\|\bW\|^2\sqrt{\frac{r}{m}}+\sqrt{r}\right)}{m-4\|\bW\|\sqrt{r}}.\label{eq:lemma_pertubation2}
\end{align}

\end{lemma}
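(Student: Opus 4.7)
The plan is to prove the three inequalities in turn. The first is a global Frobenius-norm perturbation bound derived from the optimality of $\tilde{\bU}$ against the feasible point $\bV\bV^T$ in \eqref{eq:tracesum_problem1}. The second follows from the first by the triangle inequality. The third, per-block bound rests on first-order stationarity of $\tilde{\bV}$ for the equivalent nonconvex problem \eqref{eq:tracesum_problem}, combined with a polar-factor perturbation estimate.

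For (1), I first note that the rank-$r$ and trace constraints in \eqref{eq:tracesum_problem1} force $\tilde{\bV}_i^T\tilde{\bV}_i=\bI$. Writing $\bS = \bV\bV^T - \bD + \bW$ with $\bD$ the block-diagonal part of $\bV\bV^T$, the optimality inequality $\tr(\bS\tilde{\bU})\geq\tr(\bS\bV\bV^T)$ simplifies to $A-B \leq \tr(\bW(\tilde{\bU}-\bV\bV^T))$, where $A\defeq m^2 r - \|\bV^T\tilde{\bV}\|_F^2$ and $B\defeq mr - \sum_i\|\bV_i^T\tilde{\bV}_i\|_F^2$ are the full and block-diagonal halves of $\tfrac{1}{2}\|\bV\bV^T-\tilde{\bV}\tilde{\bV}^T\|_F^2$. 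Cauchy--Schwarz applied to $\bV^T\tilde{\bV}=\sum_i\bV_i^T\tilde{\bV}_i$ yields $B\leq A/m$, so $A-B\geq\tfrac{m-1}{m}A$. Since $\tilde{\bU}-\bV\bV^T$ has rank at most $2r$, the right-hand side satisfies
\[
\tr(\bW(\tilde{\bU}-\bV\bV^T)) \leq \|\bW\|\sqrt{2r}\,\|\tilde{\bU}-\bV\bV^T\|_F = 2\|\bW\|\sqrt{rA},
\]
giving $A \leq 4m^2 r\|\bW\|^2/(m-1)^2$. After replacing $\tilde{\bV}$ by $\tilde{\bV}\bQ$ for the orthogonal $\bQ$ that makes $\bV^T\tilde{\bV}$ positive semidefinite, the inequality $\|\bV^T\tilde{\bV}\|_* \geq \|\bV^T\tilde{\bV}\|_F^2/\|\bV^T\tilde{\bV}\| \geq (m^2 r - A)/m$ converts to $\|\tilde{\bV}-\bV\|_F^2 = 2mr - 2\|\bV^T\tilde{\bV}\|_* \leq 2A/m$, yielding the claimed constant. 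Bound (2) is then immediate from $[\bW\tilde{\bV}]_i - [\bW\bV]_i = [\bW(\tilde{\bV}-\bV)]_i$ and $\|\bW(\tilde{\bV}-\bV)\|_F \leq \|\bW\|\cdot 4\|\bW\|\sqrt{r/m}$.

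For (3), first-order stationarity of $\tilde{\bV}$ in \eqref{eq:tracesum_problem} gives that $\tilde{\bV}_i$ is the orthogonal polar factor of
\[
\bY_i \defeq \sum_{j\neq i}\bS_{ij}\tilde{\bV}_j = \bV_i\bK_i + [\bW\tilde{\bV}]_i,\quad \bK_i\defeq\sum_{j\neq i}\bV_j^T\tilde{\bV}_j = (m-1)\bI + (\bV^T\tilde{\bV}-m\bI) - (\bV_i^T\tilde{\bV}_i-\bI).
\]
Bound (1) yields $\|\bV^T\tilde{\bV}-m\bI\|_F \leq \sqrt{m}\,\|\tilde{\bV}-\bV\|_F \leq 4\|\bW\|\sqrt{r}$, while $\|\bV_i^T\tilde{\bV}_i-\bI\|_F \leq \sqrt{r}$ since the singular values of $\bV_i^T\tilde{\bV}_i$ lie in $[0,1]$. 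Setting $\boldsymbol{\Lambda}_i\defeq(\bY_i^T\bY_i)^{1/2}$ so that $\tilde{\bV}_i\boldsymbol{\Lambda}_i=\bY_i$, the rearrangement
\[
(\tilde{\bV}_i-\bV_i)\boldsymbol{\Lambda}_i = \bV_i(\bK_i-\boldsymbol{\Lambda}_i) + [\bW\tilde{\bV}]_i
\]
combined with the lower bound $\smin(\boldsymbol{\Lambda}_i) \geq \smin(\bV_i\bK_i) - \|[\bW\tilde{\bV}]_i\| \geq m - 4\|\bW\|\sqrt{r}$ and a perturbation estimate for $\|\bK_i-\boldsymbol{\Lambda}_i\|_F$ derived from $\boldsymbol{\Lambda}_i^2 - \bK_i^T\bK_i = 2\,\mathrm{Sym}(\bK_i^T\bV_i^T[\bW\tilde{\bV}]_i) + [\bW\tilde{\bV}]_i^T[\bW\tilde{\bV}]_i$ yields the claimed per-block inequality after substituting (2) for $\|[\bW\tilde{\bV}]_i\|_F$. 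The main obstacle is this final polar-factor perturbation step: one must simultaneously control $\smin(\boldsymbol{\Lambda}_i)$ for the denominator and $\|\bK_i-\boldsymbol{\Lambda}_i\|_F$ for the numerator tightly enough to produce the exact denominator $m-4\|\bW\|\sqrt{r}$ together with the $\sqrt{r}$ correction in the numerator.
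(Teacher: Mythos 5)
Your treatment of the first two inequalities is correct and essentially the paper's own argument: both compare $\tilde{\bU}$ against the feasible point $\bV\bV^T$, align $\tilde{\bV}$ so that $\bV^T\tilde{\bV}$ is symmetric positive semidefinite, and convert the optimality gap into a bound on $2mr-2\|\bV^T\tilde{\bV}\|_*=\|\tilde{\bV}-\bV\|_F^2$. Your handling of the block-diagonal correction (the term $B$, with $B\leq A/m$) is in fact slightly more careful than the paper's, at the price of a constant $2\sqrt{2}\,\|\bW\|\sqrt{mr}/(m-1)$ that only implies the stated $4\|\bW\|\sqrt{r/m}$ for $m\geq 4$; this is a cosmetic issue. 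The second inequality is immediate in both treatments.

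For the third inequality your route is genuinely different from the paper's, and it has a real gap which you yourself flag. The stationarity identity $(\tilde{\bV}_i-\bV_i)\boldsymbol{\Lambda}_i=\bV_i(\boldsymbol{K}_i-\boldsymbol{\Lambda}_i)+[\bW\tilde{\bV}]_i$ is correct, but it does not close. Since $\boldsymbol{\Lambda}_i=\tilde{\bV}_i^T\bY_i=\tilde{\bV}_i^T\bV_i\boldsymbol{K}_i+\tilde{\bV}_i^T[\bW\tilde{\bV}]_i$, the available bound is $\|\boldsymbol{K}_i-\boldsymbol{\Lambda}_i\|_F\leq\|\bI-\tilde{\bV}_i^T\bV_i\|_F\,\|\boldsymbol{K}_i\|+\|[\bW\tilde{\bV}]_i\|_F$, and $\|\bI-\tilde{\bV}_i^T\bV_i\|_F$ is generically first order in $\|\tilde{\bV}_i-\bV_i\|_F$ because of its skew-symmetric part; multiplied by $\|\boldsymbol{K}_i\|=\bigOh(m)$, this term exactly cancels the gain $\smin(\boldsymbol{\Lambda}_i)=\bigOh(m)$ on the left-hand side, so nothing remains. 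Moreover $\boldsymbol{K}_i$ is not symmetric while $\boldsymbol{\Lambda}_i$ is, so standard square-root/polar perturbation estimates of the form you sketch do not apply directly. The missing mechanism is the one the paper uses: compare $\tilde{\bV}$ with the competitor $\bar{\bV}$ that swaps only the $i$-th block back to $\bV_i$, obtaining $\tr\bigl((\bV_i-\tilde{\bV}_i)^T[\bS\tilde{\bV}]_i\bigr)\leq 0$, and then symmetrize the main term: $\tr\bigl((\bI-\tilde{\bV}_i^T\bV_i)(\bV^T\tilde{\bV})\bigr)=\tfrac12\tr\bigl((\tilde{\bV}_i-\bV_i)^T(\tilde{\bV}_i-\bV_i)(\bV^T\tilde{\bV})\bigr)\geq\tfrac12\lambda_r(\bV^T\tilde{\bV})\|\tilde{\bV}_i-\bV_i\|_F^2$, which is legitimate precisely because $\bV^T\tilde{\bV}$ is symmetric positive semidefinite; the skew part that ruins your estimate is annihilated here, and the apparently first-order term becomes genuinely quadratic. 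Dividing by $\|\tilde{\bV}_i-\bV_i\|_F$ and using $\lambda_r(\bV^T\tilde{\bV})\geq m-4\|\bW\|\sqrt{r}$ (from your first inequality) produces exactly the stated numerator $2(\|[\bW\tilde{\bV}]_i\|_F+\sqrt{r})$ and denominator. Without this quadratic-versus-linear step, your per-block bound does not follow.
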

\begin{proof}[Proof of Theorem~\ref{thm:main}]
Lemma~\ref{lemma:equavalency}  and Lemma~\ref{lemma:noisy2} implies that to prove Theorem~\ref{thm:main}, it is sufficient to prove \eqref{eq:noisy2_0}, which can be verified by applying  Lemma~\ref{lemma:pertubation}.
\end{proof}

\subsection{Proofs of Lemmas}\label{sec:proof_lemma}
\begin{proof}[Proof of Lemma~\ref{lemma:equivalent}]
Since $\bU$ is symmetric and has rank $r$, there exists a decomposition such that $\bU=\bO\bO^T$, where $\bO\in\reals^{D\times r}$. Then the condition $\bU_{ii}\psdle\bI$ implies that all singular values of $\bO_i$ are not greater than $1$, $\tr(\bU_{ii})=r$ implies that $\|\bO_i\|_F=\sqrt{r}$. Recall that $\bO_i\in\reals^{m_i\times r}$ has at most $r$ singular values, we have that all singular values of $\bO_i$ are $1$ and $\bO_i^T\bO_i=\bI$. 

On the other hand, for any $\bO$ in the constraint set of \eqref{eq:tracesum_problem}, $\bU=\bO\bO^T$ lies in the constraint set of \eqref{eq:tracesum_problem1}. 

In addition, with $\bU=\bO\bO^T$ and $\bS_{ii}=\mathbf{0}$, we have $\tr(\bS\bU)=\sum_{i,j=1, i\neq j}^m\tr(\bO_i^T\bS_{ij}\bO_j)$. As a result,  \eqref{eq:tracesum_problem} and  \eqref{eq:tracesum_problem1} are equivalent. 
\end{proof}
	
\begin{proof}[Proof of Lemma~\ref{lemma:stationary}]
Let us consider the tangent space of the constraint set in \eqref{eq:tracesum_problem1} at $\bU=\tilde{\bU}$. In particular, the condition $\bU\psdge \mathbf{0}$ and $\rank(\bU)=r$ give a tangent cone of $\{\bX: P_{\tilde{\bV}^\perp}^T\bX P_{\tilde{\bV}^\perp}=\mathbf{0}\}.$
The condition $\bU_{ii}\psdle \bI$ and $\tr(\bU_{ii})=r$ give the tangent cone of $
\{\bX: P_{\bV_i}^T\bX_{ii}P_{\bV_i}\psdle \mathbf{0}, \tr(\bX_{ii})=0,\,\,\text{for all $1\leq i\leq m$}\}.$

In combination, the tangent cone of \eqref{eq:tracesum_problem1} at $\bU=\tilde{\bU}$ is
\begin{equation}\label{eq:tangent}
\calT_1=\{\bX\in\reals^{D\times D}: P_{\tilde{\bV}^\perp}^T\bX P_{\tilde{\bV}^\perp}=\mathbf{0}, \text{and for all $1\leq i\leq m$,}\,\, P_{\tilde{\bV}_i}^T\bX_{ii}P_{\tilde{\bV}_i}\psdle \mathbf{0}, \tr(\bX_{ii})=0\}.
\end{equation}
Next, we will prove that the set in \eqref{eq:tangent} can simplified to
\begin{equation}\label{eq:tangent1}
\calT_2=\{\bX\in\reals^{D\times D}: P_{\tilde{\bV}^\perp}^T\bX P_{\tilde{\bV}^\perp}=\mathbf{0}, \text{and for all $1\leq i\leq m$,}\,\, P_{\tilde{\bV}_i}^T\bX_{ii}P_{\tilde{\bV}_i}= \mathbf{0}\}.
\end{equation}
Clearly, $\calT_2\subseteq\calT_1$. Next, we will prove $\calT_1\subseteq\calT_2$ by showing that all $\bX\in \calT_1$ satisfies $P_{\tilde{\bV}_i}^T\bX_{ii}P_{\tilde{\bV}_i}= \mathbf{0}$.

For any $\bX\in \calT_1$, since $P_{\tilde{\bV}^\perp}^T\bX P_{\tilde{\bV}^\perp}=\mathbf{0}$, there exists $\bY\in\reals^{D\times r}$ such that $\bX=\bV\bY^T+\bY\bV^T$, and we have $\bX_{ii}=\bV_i\bY_i^T+\bY_i\bV_i^T$. This implies that \[
P_{\tilde{\bV}_i^\perp}^T\bX_{ii}P_{\tilde{\bV}_i^\perp}=\mathbf{0},
\]
and then
\[
0=\tr(\bX_{ii})=\tr(P_{\tilde{\bV}_i^\perp}^T\bX_{ii}P_{\tilde{\bV}_i^\perp})+\tr(P_{\tilde{\bV}_i}^T\bX_{ii}P_{\tilde{\bV}_i})=\tr(P_{\tilde{\bV}_i}^T\bX_{ii}P_{\tilde{\bV}_i}).
\]
Combining it with the assumption that $P_{\tilde{\bV}_i}^T\bX_{ii}P_{\tilde{\bV}_i}\psdge \mathbf{0}$, we have $\tr(P_{\tilde{\bV}_i}^T\bX_{ii}P_{\tilde{\bV}_i})= \mathbf{0}$. This implies that $\bX\in \calT_2$. As a result, $\calT_1=\calT_2$.

Since $\tilde{\bU}$ is the optimal solution for the problem \eqref{eq:tracesum_problem1}, we have $0=\tr(\bX\bS)=\langle\bX, \bS\rangle$ for all $\bX\in\calT_2$. Notice that $\calT_2$ is a subspace in $\reals^{D\times D}$, this implies that $\bS$ lies in the subspace of its orthogonal complement of the subspace, which is 
\begin{equation}\label{eq:tangent3}
\{\bX: \Proj_{\tilde{\bV}^\perp}\bX \Proj_{\tilde{\bV}^\perp}=\bX\}\oplus\{\bX:\bX_{ij}=\mathbf{0} \,\,\text{if $i\neq j$, and $\Proj_{\tilde{\bV}_i}\bX_{ii} \Proj_{\tilde{\bV}_i}=\bX_{ii}$.}\}
\end{equation}
This proves the decomposition of $\bS=\bT^{(1)}+\bT^{(2)}$, such that 
\[
\text{$\Proj_{\tilde{\bV}^\perp}\bT^{(1)} \Proj_{\tilde{\bV}^\perp}=\bT^{(1)}$, $\bT^{(2)}_{ij}=\mathbf{0}$ for $i\neq j$, and $\Proj_{\tilde{\bV}_i}\bT^{(2)}_{ii} \Proj_{\tilde{\bV}_i}=\bT^{(2)}_{ii}$.}
\]
The formula in \eqref{eq:tildeT1} and \eqref{eq:tildeT2} follow from the properties of $\bT^{(1)}, \bT^{(2)}$ and in particular, the last equation in \eqref{eq:tildeT2} follows from the fact that $\bT^{(1)}\tilde{\bV}=\mathbf{0}$.
\end{proof}

\begin{proof}[Proof of Lemma~\ref{lemma:equavalency}]
For any $\bU$ in the constraint set of  \eqref{eq:tracesum_relaxed} such that $\bU\neq \tilde{\bU}$, and $\bX=\bU-\tilde{\bU}$, we have $P_{\tilde{\bV}^\perp}^T\bX P_{\tilde{\bV}^\perp}=P_{\tilde{\bV}^\perp}^T\bU P_{\tilde{\bV}^\perp}-P_{\tilde{\bV}^\perp}^T\tilde{\bU} P_{\tilde{\bV}^\perp}= P_{\tilde{\bV}^\perp}^T\bU P_{\tilde{\bV}^\perp}\psdge \mathbf{0}$, and $P_{\tilde{\bV}_i}^T\bX_{ii}P_{\tilde{\bV}_i}=P_{\tilde{\bV}_i}^T\bU_{ii}P_{\tilde{\bV}_i}-P_{\tilde{\bV}_i}^T\tilde{\bU}_{ii}P_{\tilde{\bV}_i}=P_{\tilde{\bV}_i}^T\bU_{ii}P_{\tilde{\bV}_i}-\bI\psdle \mathbf{0}$. In summary, $\bX$ has the properties of
\begin{equation}\label{eq:tangent4}
P_{\tilde{\bV}^\perp}^T\bX P_{\tilde{\bV}^\perp}\psdge \mathbf{0}, \text{$\tr(\bX_{ii})=0$ and  $P_{\tilde{\bV}_i}^T\bX_{ii}P_{\tilde{\bV}_i}\psdle \mathbf{0}$ for all $1\leq i\leq m$.} 
\end{equation}

In addition, either $P_{\tilde{\bV}^\perp}^T\bX P_{\tilde{\bV}^\perp}$ is nonzero or  $P_{\tilde{\bV}_i}^T\bX_{ii} P_{\tilde{\bV}_i}$ is nonzero. If they are all zero matrices, then we have
\begin{equation}\label{eq:tangent5}
P_{\tilde{\bV}^\perp}^T\bU P_{\tilde{\bV}^\perp}=P_{\tilde{\bV}^\perp}^T\tilde{\bU} P_{\tilde{\bV}^\perp}=\mathbf{0},
\end{equation}
and
\begin{equation}\label{eq:tangent6}
P_{\tilde{\bV}_i}^T\bU_{ii} P_{\tilde{\bV}_i}=P_{\tilde{\bV}_i}^T\tilde{\bU}_{ii} P_{\tilde{\bV}_i}=\bI.
\end{equation}
Since  $\bU_{ii}\psdge 0$, we have $\tilde{\bV}_i\bU_{ii}\tilde{\bV}_i\psdge \mathbf{0}$. Combining it with $\tr(P_{\tilde{\bV}_i}^T\bU_{ii} P_{\tilde{\bV}_i})=r$ and $r=\tr(\bU_{ii})=\tr(P_{\tilde{\bV}_i}^T\bU_{ii} P_{\tilde{\bV}_i})+\tr(\tilde{\bV}_i\bU_{ii}\tilde{\bV}_i)$, we have $\tilde{\bV}_i\bU_{ii}\tilde{\bV}_i= \mathbf{0}$. Combining it with $\bU_{ii}\psdge \mathbf{0}$, we have $\tilde{\bV}_i\bU_{ii}=\mathbf{0}$ and $\bU_{ii}\tilde{\bV}_i=\mathbf{0}$. It implies that 
\begin{equation}\label{eq:tangent7}
\bU_{ii}=\Proj_{\tilde{\bV}_i}^T\bU_{ii} \Proj_{\tilde{\bV}_i}=\tilde{\bV}_i\tilde{\bV}_i^T.
\end{equation}
In addition, \eqref{eq:tangent5} and $\bU\psdge \mathbf{0}$ means that the $\bU=\Proj_{\tilde{\bV}}\bU \Proj_{\tilde{\bV}}$, that is, there exists a matrix $\bZ\in\mathbb{R}^{r\times r}$ such that $\bU=\tilde{\bV}\bZ\tilde{\bV}^T$ and as a result, $\bU_{ii}=\tilde{\bV}_i\bZ\tilde{\bV}_i^T$. Combining it with \eqref{eq:tangent7}, we have $\bZ=\bI$ and $\bU=\tilde{\bV}\tilde{\bV}^T=\tilde{\bU}$, which is a contradiction to $\bU\neq \tilde{\bU}$.

Combining the property of $\bX$ in \eqref{eq:tangent4} with the assumption that $\{P_{\tilde{\bV}_i}^T\hat{\bT}^{(2)}_{ii} P_{\tilde{\bV}_i}\}_{i=1}^m$ and $-P_{\tilde{\bV}^\perp}^T\hat{\bT}^{(1)}P_{\tilde{\bV}^\perp}$ are  positive definite matrices, we have
\begin{align}
&\tr(\bX\bS)=\tr(\bX\hat{\bT}^{(1)})+\tr(\bX\hat{\bT}^{(2)})+c\tr(\bX)= \tr\left((P_{\tilde{\bV}^\perp}^T\bX P_{\tilde{\bV}^\perp})( P_{\tilde{\bV}^\perp}^T\hat{\bT}^{(1)}P_{\tilde{\bV}^\perp})\right)+\sum_{i=1}^m\tr(\bX_{ii} \hat{\bT}^{(2)}_{ii})\\= &\tr\left((P_{\tilde{\bV}^\perp}^T\bX P_{\tilde{\bV}^\perp})( P_{\tilde{\bV}^\perp}^T\hat{\bT}^{(1)}P_{\tilde{\bV}^\perp})\right) + \sum_{i=1}^m\tr\left((P_{\tilde{\bV}_i}^T\bX_{ii} P_{\tilde{\bV}_i})(P_{\tilde{\bV}_i}^T\hat{\bT}^{(2)}_{ii} P_{\tilde{\bV}_i})\right)< 0.\label{eq:tangent8}
\end{align}
The last inequality is strict because either $P_{\tilde{\bV}^\perp}^T\bX P_{\tilde{\bV}^\perp}$ is nonzero or  $P_{\tilde{\bV}_i}^T\bX_{ii} P_{\tilde{\bV}_i}$ is nonzero for some $1\leq i\leq m$. \eqref{eq:tangent} then implies that $\trace(\bS\bU)< \trace(\bS\tilde{\bU})$ for all $\bU\neq\tilde{\bU}$, and as a result, $\tilde{\bU}$ is the unique solution to  \eqref{eq:tracesum_relaxed}.

\end{proof}

\begin{proof}[Proof of Lemma~\ref{lemma:clean}]
Since $L_1\in L_2$, we have $\Proj_{L_2\cap L_1^\perp}=\Proj_{L_2}-\Proj_{L_1}$. Applying the definitions of $L_1$ and $L_2$, we have
\[
[\Proj_{L_1}]_{ij}=\frac{1}{m}\bV_i\bV_j^T
\]
and
\[
[\Proj_{L_2}]_{ii}=\bV_i\bV_i^T, \,\,[\Proj_{L_2}]_{ij}=\mathbf{0}\,\,\text{for $i\neq j$}.
\]
Combining it with the definition of $\bT^{(1)*}$ in \eqref{eq:tildeT1} and \eqref{eq:tildeT2}, we have
\begin{equation}\label{eq:tildeTstar1}
[\bT^{(1)}]_{ij}=\bV_i\bV_i^T\,\,\text{for $i\neq j$}, \,\,P_{\tilde{\bV}_i^\perp}^T[\bT^{(1)}]_{ii}=0,\,\,[\bT^{(1)}]_{ii}=-(m-1){\bV}_i^T{\bV}_i=-[\bT^{(2)}]_{ii},
\end{equation} 
and Lemma~\ref{lemma:clean} is proved.
\end{proof}

\begin{proof}[Proof of Lemma~\ref{lemma:noisy}]
(1)
Applying \eqref{eq:tildeT1} and \eqref{eq:tildeT2}, it is easy to see that \begin{equation}\text{$[\hat{\bT}^{(1)}-\hat{\bT}^{(1)*}]_{ij}=\bW_{ij}$ for $i\neq j$, $[\hat{\bT}^{(1)}]_{ii}=-(\sum_{j=1,j\neq i}^{m}\bS_{ij}\tilde{\bV}_j)\tilde{\bV}_i^T$.}\label{eq:tildeTstar2}\end{equation}Combining it with \eqref{eq:tildeTstar1}, we have
\begin{align}\label{eq:pertubation_1}
\|\hat{\bT}^{(1)}-\hat{\bT}^{(1)*}\|=\|\bT^{(1)}-\bT^{(1)*}\|\leq \|\bW\|+\max_{1\leq i\leq m}\left\|\left(\sum_{j=1,j\neq i}^{m}\bS_{ij}\tilde{\bV}_j\right)\tilde{\bV}_i^T-(m-1)\bV_i\bV_i^T\right\|.
\end{align}

Applying 
\begin{equation}\label{eq:j_exclude_i}
\|(\sum_{j=1,j\neq i}^{m}\bV_j^T\tilde{\bV}_j)-(m-1)\bI\|\leq \|(\sum_{j=1}^{m}\bV_j^T\tilde{\bV}_j)-m\bI\|+\|\bV_i^T\tilde{\bV}_i-\bI\|\leq \|\bV^T\tilde{\bV}-m\bI\|+\|\bV_i-\tilde{\bV}_i\|,
\end{equation}
we have 
\begin{align}\label{eq:j_exclude_i2}
&\|(\sum_{j=1,j\neq i}^{m}\bV_i\bV_j^T\tilde{\bV}_j)\tilde{\bV}_i^T-(m-1)\bV_i\bV_i^T\|
= \|(\sum_{j=1,j\neq i}^{m}\bV_j^T\tilde{\bV}_j)\tilde{\bV}_i^T-(m-1)\bV_i^T\|
\\\leq &\|(\sum_{j=1,j\neq i}^{m}\bV_j^T\tilde{\bV}_j)-(m-1)\bI\|+(m-1)\|\tilde{\bV}_i-\bV_i\|\leq \|\bV^T\tilde{\bV}-m\bI\|+m\|\tilde{\bV}_i-\bV_i\|.\nonumber
\end{align}
Applying \eqref{eq:j_exclude_i2} and $\bS_{ij}=\bW_{ij}+\bV_i\bV_j^T$ when $i\neq j$, the RHS of \eqref{eq:pertubation_1} is bounded by $m\max_{1\leq i\leq m}\|\tilde{\bV}_i-\bV_i\|+\max_{1\leq i\leq m}\|\sum_{j=1}^m\bW_{ij}\tilde{\bV}_j\|+\|\bV^T\tilde{\bV}-m\bI\|.$

(2)  Applying \eqref{eq:tildeTstar2} and \eqref{eq:tildeT2}, we have \[
[\hat{\bT}^{(2)}]_{ii}=(\sum_{j=1,j\neq i}^{m}\bS_{ij}\tilde{\bV}_j)\tilde{\bV}_i^T.
\]
Combining it with \eqref{eq:j_exclude_i}, 
\begin{align*}
&\|\tilde{\bV}_i^T[\bT^{(2)}]_{ii}\tilde{\bV}_i-(m-1)\bI\|=\left\|\tilde{\bV}_i^T\Big(\sum_{j=1,j\neq i}^m\bS_{ij}\tilde{\bV}_j\Big)-(m-1)\bI\right\|\\\leq& \|\sum_{j=1}^m\bW_{ij}\tilde{\bV}_j\|+\left\|\tilde{\bV}_i^T\bV_i \Big(\sum_{j=1,j\neq i}^m{\bV}_j^T\tilde{\bV}_j\Big)-(m-1)\bI\right\|\\\leq &\Big\|\sum_{j=1}^m\bW_{ij}\tilde{\bV}_i\Big\|+\|\tilde{\bV}_i^T\bV_i-\bI\|\Big\|\sum_{j=1,j\neq i}^m{\bV}_j^T\tilde{\bV}_j\Big\|+\left\|\sum_{j=1,j\neq i}^m{\bV}_j^T\tilde{\bV}_j-(m-1)\bI\right\|
\\\leq &
\Big\|\sum_{j=1}^m\bW_{ij}\tilde{\bV}_i\Big\|+(m-1)\|\tilde{\bV}_i^T\bV_i-\bI\|+\|{\bV}^T\tilde{\bV}-m\bI\|+\|\bV_i-\tilde{\bV}_i\|\\
=&\Big\|\sum_{j=1}^m\bW_{ij}\tilde{\bV}_i\Big\|+m\|\tilde{\bV}_i^T\bV_i-\bI\|+\|{\bV}^T\tilde{\bV}-m\bI\|.
\end{align*}

\end{proof}

\begin{proof}[Proof of Lemma~\ref{lemma:noisy2}]
Define $\hat{\bT}^{(1)}$ and $\hat{\bT}^{(2)}$ as follows:
\[
\hat{\bT}^{(1)}_{ij}=\bT^{(1)}_{ij}\,\,\text{and $\hat{\bT}^{(2)}_{ij}=\bT^{2}_{ij}$ for $i\neq j$, $\hat{\bT}^{(1)}_{ii}=\bT^{(1)}_{ii}-c\Proj_{\tilde{\bV}_i^\perp}$, and $\hat{\bT}^{(2)}_{ii}=\bT^{(2)}_{ii}-c\Proj_{\tilde{\bV}_i}.$}
\]
Clearly, $\bS=\hat{\bT}^{(1)}+\hat{\bT}^{(2)}+c\bI$, $\Proj_{\tilde{\bV}^\perp}\hat{\bT}^{(1)}\Proj_{\tilde{\bV}^\perp}=\hat{\bT}^{(1)}$, and $\hat{\bT}^{(2)}_{ii}=\Proj_{\tilde{\bV}_i}\hat{\bT}^{(2)}_{ii} \Proj_{\tilde{\bV}_i}$. It remains to show that $P_{\tilde{\bV}_i}^T\hat{\bT}^{(2)}_{ii} P_{\tilde{\bV}_i}$ and $-P_{\tilde{\bV}^\perp}^T\hat{\bT}^{(1)}P_{\tilde{\bV}^\perp}$ are  positive definite matrices. 

Applying \eqref{eq:noisy1}, we have that $P_{{\tilde{\bV}_i}}^T\hat{\bT}^{(2)}_{ii}P_{{\tilde{\bV}_i}}$ is positive definite if
\begin{equation}\label{eq:noisy2_1}
m-1>c+\Big\|\sum_{j=1}^m\bW_{ij}\tilde{\bV}_i\Big\|+m\|\tilde{\bV}_i^T\bV_i-\bI\|+\|{\bV}^T\tilde{\bV}-m\bI\|.
\end{equation}
If we define the subspace $L_3=L_2^\perp=\{\bx\in\reals^D: \bx_i\in\Sp(\bV_i^\perp)\}$, then we have  $\bT^{(1)*}=-m\Proj_{L_2\cap L_1^\perp}-c\Proj_{L_3}$. Considering that $\dim(L_2\cap L_1^\perp)=\dim(L_2)-\dim(L_1)=rm-r$ and $\dim(L_3)=D-\dim(L_2)=D-rm$, we have $\lambda_{r+1}(\hat{\bT}^{(1)*})=-m/2$. Applying the result on the perturbation of eigenvalue that
\[
|\lambda_{r+1}(\hat{\bT}^{(1)*})-\lambda_{r+1}(\hat{\bT}^{(1)})|\leq \|\hat{\bT}^{(1)*}-\hat{\bT}^{(1)}\|
\]
and \eqref{eq:noisy2}, when
\begin{equation}\label{eq:noisy2_2}c>m\max_{1\leq i\leq m}\|\tilde{\bV}_i-\bV_i\|+\max_{1\leq i\leq m}\|\sum_{j=1}^m\bW_{ij}\tilde{\bV}_j\|+\|\bV^T\tilde{\bV}-m\bI\|,
\end{equation}
$\lambda_{r+1}(\hat{\bT}^{(1)})$ is negative, which means that $\hat{\bT}^{(1)}$ has at least $D-r$ negative eigenvalues. By definition, $\hat{\bT}^{(1)}$ has $r$ zeros eigenvalues with eigenvectors spanning the column space of $\tilde{\bV}$, so the $P_{\tilde{\bV}^\perp}^T\hat{\bT}^{(1)}P_{\tilde{\bV}^\perp}$ is negative definite. 

When \eqref{eq:noisy2_0} holds, then applying \eqref{eq:noisy2}, we can find $c$ such that both \eqref{eq:noisy2_1} and \eqref{eq:noisy2_2} are true, and Lemma~\ref{lemma:noisy2} is then proved.  
\end{proof}

\begin{proof}[Proof of Lemma~\ref{lemma:pertubation}]
First, we remark that the choice of $\tilde{\bV}$ is only unique up to an $r\times r$ orthogonal matrix. In this proof, we choose $\tilde{\bV}$ such that $\tilde{\bV}^T$ is a symmetric, positive semidefinite matrix.

Then we have that
\begin{align}\label{eq:pertubation1}
&\|\tilde{\bV}-\bV\|_F^2=\sum_{i=1}^m\|\tilde{\bV}_i-\bV_i\|_F^2=\sum_{i=1}^m\|\tilde{\bV}_i\|_F^2+\|\bV_i\|_F^2-2\tr(\tilde{\bV}_i\bV_i^T)=\sum_{i=1}^m\|\tilde{\bV}_i\|_F^2+\|\bV_i\|_F^2-2\tr(\bV_i^T\tilde{\bV}_i)\\=&2rm-2\tr(\sum_{i=1}^m\bV_i^T\tilde{\bV}_i)=2rm-2\|\bV^T\tilde{\bV}\|_*,
\end{align}
where $\|\cdot\|_*$ represents the nuclear norm that is the summation of all singular values (and since $\bV^T\tilde{\bV}$ is positive semidefinite, it is also the summation of its eigenvalues).

Using the definition in \eqref{eq:tracesum_problem1}, we have
\begin{align}\label{eq:pertubation2}
\tr(\tilde{\bV}^T\bS\tilde{\bV})\geq 
\tr({\bV}^T\bS{\bV}).
\end{align}
With the definition of $\bS$ (note that if $\bW=0$, then $\tr(\tilde{\bV}^T\bS\tilde{\bV})=\|\tilde{\bV}^T\bV\|_F^2-rm$), it implies that
\begin{align}\label{eq:pertubation4}
\tr(\tilde{\bV}^T\bW\tilde{\bV})-\tr({\bV}^T\bW{\bV})\geq \|\bV^T\bV\|_F^2-\|\tilde{\bV}^T\bV\|_F^2=rm^2-\|\tilde{\bV}^T\bV\|_F^2.
\end{align}
Since $\|\bX\|_F^2=\sum_{i}\lambda_i(\bX)^2$, we have
\begin{align}\label{eq:pertubation5}
rm^2-\|\tilde{\bV}^T\bV\|_F^2=\sum_{i=1}^r(m^2-\lambda_i(\tilde{\bV}^T\bV)^2)\geq m \sum_{i=1}^r(m-\lambda_i(\tilde{\bV}^T\bV))=m(rm-\|\tilde{\bV}^T\bV\|_*).
\end{align}
The combination \eqref{eq:pertubation4}, \eqref{eq:pertubation5}, $\|\tilde{\bV}\|_F=\|\bV\|_F=\sqrt{rm}$,  $\tr(\bA\bB)\leq \|\bA\|_F\|\bB\|_F$, and $\|\bA\bB\|_F\leq \|\bA\|\|\bB\|_F$ implies that
\begin{align*}
&m(rm-\|\tilde{\bV}^T\bV\|_*)\leq \tr(\tilde{\bV}^T\bW\tilde{\bV})-\tr({\bV}^T\bW{\bV})
=\tr((\tilde{\bV}-\bV)^T\bW\tilde{\bV})+\tr(\bV^T\bW(\tilde{\bV}-\bV))\\
\leq& \|\bW\|\|\tilde{\bV}-\bV\|_F\|\tilde{\bV}\|_F+\|\bW\|\|\tilde{\bV}-\bV\|_F\|{\bV}\|_F=2\|\bW\|\|\tilde{\bV}-\bV\|_F\sqrt{rm}.
\end{align*}
Combining it with \eqref{eq:pertubation1}, we have
\begin{align}\label{eq:pertubation6}
\frac{m}{2}\|\tilde{\bV}-\bV\|_F^2\leq 2\|\bW\|\|\tilde{\bV}-\bV\|_F\sqrt{rm},
\end{align}
which implies that
\begin{align}\label{eq:pertubation7}
\|\tilde{\bV}-\bV\|_F\leq 4\|\bW\|\sqrt{\frac{r}{m}},
\end{align}
which proves the first inequality in \eqref{eq:lemma_pertubation1}. It implies that
\begin{align}\label{eq:pertubation7.5}
\|\tilde{\bV}\bV^T-m\bI\|_F=\|(\tilde{\bV}-\bV)\bV^T\|_F\leq \|\tilde{\bV}-\bV\|_F\|\bV\|= \|\tilde{\bV}-\bV\|_F\sqrt{m}\leq 4\|\bW\|\sqrt{r}.
\end{align}

Now let us consider $\bar{\bV}\in\reals^{D\times r}$ defined by $\bar{\bV}_i=\bV_i$ and $\bar{\bV}_j=\tilde{\bV}_j$ for all $1\leq j\leq m, j\neq i$. By definition we have
\[
\tr(\tilde{\bV}^T\bS\tilde{\bV})\geq 
\tr(\bar{\bV}^T\bS\bar{\bV}),
\]
and it is equivalent to
\[
\tr((\tilde{\bV}-\bar{\bV})^T\bS\tilde{\bV})+
\tr(\tilde{\bV}\bS(\tilde{\bV}-\bar{\bV}))-\tr((\tilde{\bV}-\bar{\bV})^T\bS(\tilde{\bV}-\bar{\bV}))\geq 0.
\]
By the definition of $\bar{\bV}$, $\tilde{\bV}$, and notice that $\bS_{ii}=\mathbf{0}$, we have
\[
2\tr\left((\bV_i-\tilde{\bV}_i)^T[\bS\tilde{\bV}]_i\right)\leq 0.
\]
By the definition of $\bS$, it implies that
\begin{align}\label{eq:pertubation10}
\tr\left((\bV_i-\tilde{\bV}_i)^T\bV_i\bV^T\tilde{\bV}- (\bV_i-\tilde{\bV}_i)^T\bV_i\bV_i^T\tilde{\bV}_i+(\bV_i-\tilde{\bV}_i)^T[\bW\tilde{\bV}]_i\right)\leq 0
\end{align}
Recall that $\bV^T\tilde{\bV}$ is symmetric, positive semidefinite, and apply the fact that when $\bA$ is positive semidefinite, then $\tr(\bB\bA)=\tr(\bB^T\bA)$ and when both $\bA, \bB$ are p.s.d., $\tr(\bA\bB)\geq \tr(\bA\lambda_{\min}(\bB)\bI)\geq \lambda_{\min}(\bB)\tr(\bA)$ ($\lambda_{\min}$ represents the smallest eigenvalue), we have
\begin{align}\label{eq:pertubation11}
&\tr\left((\bV_i-\tilde{\bV}_i)^T\bV_i\bV^T\tilde{\bV}\right)=
\tr\left((\bI-\tilde{\bV}_i^T\bV_i)(\bV^T\tilde{\bV})\right)=
\frac{1}{2}\tr\left((2\bI-\tilde{\bV}_i^T\bV_i-\bV_i^T\tilde{\bV}_i)(\bV^T\tilde{\bV})\right)\\=&\frac{1}{2}\tr\left((\tilde{\bV}_i-\bV_i)^T(\tilde{\bV}_i-\bV_i)(\bV^T\tilde{\bV})\right)\geq \frac{1}{2}\tr\left((\tilde{\bV}_i-\bV_i)^T(\tilde{\bV}_i-\bV_i)\right)\lambda_r(\bV^T\tilde{\bV})=\frac{1}{2}\|\tilde{\bV}_i-\bV_i\|_F^2\lambda_r(\bV^T\tilde{\bV}).\nonumber
\end{align}
Combining \eqref{eq:pertubation10}, \eqref{eq:pertubation11}, and $\tr(\bA\bB)\leq \|\bA\|_F\|\bB\|_F,$ we have
\[\frac{1}{2}\lambda_r(\bV^T\tilde{\bV})\|\bV_i-\tilde{\bV}_i\|_F^2\leq \|\bV_i-\tilde{\bV}_i\|_F(\|[\bW\tilde{\bV}]_i\|_F+\sqrt{r})\]
Since it holds for all $1\leq i\leq m$, 
\begin{align}\label{eq:pertubation12}
\frac{1}{2}\lambda_r(\bV^T\tilde{\bV})\max_{1\leq i\leq m}\|\bV_i-\tilde{\bV}_i\|_F\leq \max_{1\leq i\leq m}\|[\bW\tilde{\bV}]_i\|_F+\sqrt{r}.\end{align}
Applying \eqref{eq:pertubation7}, the second inequality in \eqref{eq:lemma_pertubation1} is proved:
\begin{align}\nonumber&\max_{1\leq i\leq m}\|[\bW\tilde{\bV}]_i\|_F\leq \max_{1\leq i\leq m}\|[\bW{\bV}]_i\|_F+\max_{1\leq i\leq m}\|[\bW(\tilde{\bV}-\bV)]_i\|_F\leq \max_{1\leq i\leq m}\|[\bW{\bV}]_i\|_F\\&+\|\bW\|\|\tilde{\bV}-\bV\|_F
\leq \max_{1\leq i\leq m}\|[\bW{\bV}]_i\|_F+4\|\bW\|^2\sqrt{\frac{r}{m}}.\label{eq:pertubation13}
\end{align}
Combining \eqref{eq:pertubation13} with \eqref{eq:pertubation7.5} (which implies that $\lambda_r(\bV^T\tilde{\bV})\geq m-4\|\bW\|\sqrt{r}$), and \eqref{eq:pertubation12}, \eqref{eq:lemma_pertubation2} is proved.
\end{proof}
\section{Conclusion}
This paper studies the orthogonal trace-sum maximization \cite{Won2018}. It shows that while the problem is nonconvex, its solution can be achieved by solving its convex relaxation when the noise is small.

A future direction is to improve the estimation on maximum noise that this method can handle. While this paper showed that the method succeeds when $\sigma=O(m^{1/4})$, we expect that it would also hold for noise as large as $\sigma=O(m^{1/2})$, which has been proved in \cite{doi:10.1137/17M1122025} for the special case of phase synchronization. Another future direction is to use a more general model than \eqref{eq:model}, which would have a larger range of real-life applications.

\section{Acknowledgement}
The author would like to thank Hua Zhou and Joong-Ho Won for introducing this interesting problem and related literatures.

%
%
%
\bibliographystyle{abbrv}
\bibliography{bib-online}
\end{document}